\def\cal#1{\mathcal{#1}}
\def\bb#1{\mathbb{#1}}
\def\bf#1{\mathbf{#1}}
\newcommand{\bbR}{\mathbb{R}}
\newcommand{\calQ}{\mathcal{Q}}
 \newtheorem{thm}{Theorem}[section]
 \newtheorem{cor}[thm]{Corollary}
 \newtheorem{lem}[thm]{Lemma}
 \newtheorem{prop}[thm]{Proposition}
 \theoremstyle{definition}
 \newtheorem{defn}[thm]{Definition}
 \theoremstyle{remark}
 \numberwithin{equation}{section}
 \DeclareMathOperator{\Hom}{Hom}
 \DeclareMathOperator{\image}{image}
 \DeclareMathOperator{\rank}{rank}
\begin{document}

%
%
%
%
%
%
%
%
%

\title[Geometry of higher order variational identities]
 {The projective symplectic geometry of higher order variational problems:\\ 
 	minimality conditions}

\author[Dur\'an]{C. Dur\'an}

\address{C. Dur\'an\\Departamento de Matem\'atica, UFPR \\
 Setor de Ci\^encias Exatas, Centro Polit\'ecnico,
 Caixa Postal 019081,  CEP 81531-990,
 Curitiba, PR, Brazil}

\email{cduran@ufpr.br}

\author[Otero]{D. Otero}

\address{D. Otero\\Departamento de Matem\'atica, UFPR \\
	Setor de Ci\^encias Exatas, Centro Polit\'ecnico,
	Caixa Postal 019081,  CEP 81531-990,
	Curitiba, PR, Brazil and 
	IME-USP, Rua do Matão, 1010, CEP 05508-090, S\~ao Paulo, SP}

\email{diego.mano@gmail.com}

\thanks{The second named author was financially supported by CNPq, grant number 140837/2012-4.}
\subjclass{Primary  53C05}

\keywords{Calculus of variations, isotropic subspaces, Lagrangian Grassmannian}

\date{}

\begin{abstract}
	We associate  curves of isotropic, Lagrangian and coisotropic subspaces to higher order, one parameter  variational problems. Minimality and conjugacy properties of extremals are described 
	in terms of these curves.

  \end{abstract}

\maketitle
\section{Introduction}

We introduce the subject by briefly describing the classical case  of Riemannian manifolds: Let $(M,g)$ be Riemannian manifold, and $E=\frac{1}{2}g(v,v)$ its associated energy function. On the space $\Omega_{p,q}$ of smooth curves $\gamma$ joining 
$p=\gamma(a)$ with $q=\gamma(b)$, we consider the variational problem with fixed endpoints associated to the action functional 
$A(\gamma) = \int_a^b E(\gamma'(s))\, ds$. The local minimality of a geodesic $\gamma$ is assured, using the second variation formula,  by the strong positive definiteness of the Hessian of the functional, which in turn is given by the non-existence of 
 conjugate points on the interval $(a,b]$.

It is well-known that the constructions mentioned in the previous paragraph can be studied 
via the local, global and self-intersection properties of certain curves of Lagrangian planes in a symplectic vector space. This viewpoint, in addition to be of great value in 
	understanding purely Riemannian geometry \cite{paternain,ruggiero}, is particularly suited to the study 
of more general variational problems, e.g., Finsler geometry \cite{javaloyes-vitorio},  Lorentzian and semi-Riemannian geometry \cite{mercuri-piccione-tausk} and sub-Riemannian geometry \cite{agrachev}.

The aim of this paper is to develop this projective symplectic viewpoint for 
{\em higher order} variational problems, that is, functionals associated to 
		Lagrangians $L:J^k(\bbR,M) \to \bbR$ defined on the space of $k$-jets of curves on $M$, using a purely variational approach: directly from the Hessian of the given functional and its associated Euler-Lagrange equation, without detouring through non-holonomic mechanics. Higher order variational problems 
		appear, for example in control theory (e.g. \cite{piccione-control}) and 
		applications to physics (e.g., higher-order mechanics considered in the book \cite{leon-rodriguez} and in the papers \cite{crampin-sarlet-cantrijn, martinez-roy}).
		 
		 We use, extend and interpret in the symplectic-projetive setting the line of classical identities of 
		Cimino, Picone, Eastham, Eswaran, etc.,\cite{cimino,eastham,eswaran,kreith,leighton,picone}, which systematize the writing 
		of higher order Lagrangians as a perfect square plus a total differential term.  For the higher dimensional case, non-commutativity 
		produces combinatorial difficulties in the establishment of the aforementioned identities. In this case we extend to arbitrary dimensions the work of Coppel \cite{coppel}, which uses the 
		Legendre transform to produce a Hamiltonian version that is much more amenable to 
		computation. As a by-product, we get a version of Eswaran's identity for the higher
		dimensional case.   
		
		We shall see that to such variational problems there is a curve of {\em isotropic} subspaces whose successive prolongations determine the conjugacy, positive definiteness of the Hessian and, 
		therefore, the local minimality of the extremals.

As in the first order problem, the main actor for determining minimality and conjugacy 
is the self-intersection properties of a (prolonged) curve of {\em Lagrangian} subspaces which we call the {\em Jacobi curve}. However, in
contrast with the first-order case, this curve is {\em always} degenerate, even for 
positive definite Lagrangians. 
Another purpose of this work is to precisely state the degree of degeneracy of 
the Jacobi curve for higher-order problems. The present paper focuses on projectivization of the sufficient conditions for minimality; more precise information concerning the index of the Hessian (that in particular furnishes neccessary conditions) will be studied in the sequel \cite{duran-eidam-otero}.

The paper is organized as follows:  a brief review  of the preliminary setting up is done in section 
\ref{preliminaries}.
The core of the paper is then given in sections \ref{one-dimension} and \ref{n-dimension}, where the Jacobi curve is constructed and studied respectively for one-dimensional
and $n$-dimensional, higher order variational problems. The reason for the separation
is that the one-dimensional case can be treated using the one-dimensional classical
identities, which makes it a good starting point to understand the structure of the 
Jacobi curve, whereas in the high dimensional case we shall use the Legendre 
transform in order to apply the work of Coppel \cite{coppel}, which we extend to the $n$-dimensional
case. 

\newpage

\section{Preliminaries}\label{preliminaries}

\subsection{The Hessian of a higher order variational problem} \label{higher-order-manifolds}

Let $M^n$ be an $n$-dimensional manifold, and $L:J^k(\bbR, M) \rightarrow \bb{R}$ be a $k$-th order 
Lagrangian. We shall assume the {\em strong Legendre condition}, which is that the restriction 
of the Lagrangian $L$ to the inverse image of the projection 
$J^k(\bbR, M) \to J^{k-1}(\bbR, M)$ is strictly convex. Such a Lagrangian takes the local form $L(q_0^A,\dots,q_k^A)$,  where 
the subindex is the order of differentiation and the superindex the $n$-coordinates 
on $M$ gives local coordinates on $J^k(\bbR, M)$, and the strong Legendre condition 
takes the form ``$\frac{\partial L}{\partial q_k^A \partial q_k^B}$ is positive definite". From this data, we associate 
the functional 
\[
\mathcal L [\sigma] = \int_a^b L(j^k(\sigma)(s))ds \, ,
\]
where $j^k \sigma (s)$ denotes the $k$-jet prolongation of $\sigma$; in the coordinates 
given this is just the curve $(\sigma(s), \dot\sigma(s), \dots \sigma^{(k)}(s))$. Take $p,q \in J^{k-1}(\bbR, M)$, and let
us, as usual, restrict $\mathcal L$ to the set of curves $\Omega_{p,q}[a,b]$ given by 
\[
\Omega_{p,q}[a,b] = \{\sigma:[a,b] \rightarrow M \, | \, j^{k-1} \sigma (a) = p \text{ and } j^{k-1} \sigma (b) = q\}.
\]

Consider now an extremal $\gamma(t)$ of the functional $\mathcal L$. This curve 
satisfies the Euler-Lagrange equation, which might be understood in local coordinates
(giving an elementary approach as in \cite{gelfand-fomin}, 2.11), a symplectic approach as in \cite{leon-rodriguez,tul} or 
a more abstract one as in \cite{anderson}. The approach taken makes no difference 
in what follows. For the standard infinite dimensional manifold theory needed to talk about 
differentiability  we refer the reader to \cite{kriegl-michor, palais-book}.

The crucial point is that, if $\gamma$ is a critical point of $\mathcal L$, that is, 
$d\mathcal L_\gamma = 0$ for some reasonable notion of differentiability, then the Hessian of $\mathcal L$ at $\gamma$ is intrinsically 
defined, for example as in \cite{palais-article1,palais-article2}. This Hessian is a quadratic form on the space 
$T_\gamma \Omega_{p,q}[a,b]$
of vector fields along $\gamma(t)$ which vanish up to order $k-1$ at the endpoints. 
Since we know {\em a priori} that the Hessian is well defined, we can compute in a convenient way: let us choose a frame 
$v_1(t), \dots , v_n(t)$ of vector fields along $\gamma(t)$ that span 
$T_{\gamma(t)}M$ at each point. In terms of this frame, each element  
$X(t) \in  T_{\gamma(t)}M$ can be written as 
$X(t)= \sum_i h_i(t) v_i(t)$, and then $ T_\gamma \Omega_{p,q}[a,b]$ can be identified 
(with respect to the frame $v_1, \dots , v_n$) with the space 
\[
C^\infty_{k}([a,b], \bbR^n)=
\{h \in C^\infty([a,b],\bbR^n) \, |\, h^{(s)}(a) = h^{(s)}(b) = 0, \text{ for } 0 \leq s \leq k-1  \}
\]
then we can work in local coordinates do show that the Hessian of $\mathcal L$ at an
extremum is given by a quadratic form as follows:
\[
\calQ[h] = \int_a^b  \sum_{1\leq i \leq j \leq k} 
 h^{(j)}(t)^\top Q_{ij}(t)h^{(i)}(t) \, ,
\]
where each $Q_{ij}(t)$ 
is a smooth $n\times n$-matrix valued function. The matrices $Q_{ij}$ depend on the Lagrangian 
and its derivatives, but the only important information is that the top level $Q_{kk}(t)$ is symmetric and positive definite. These two conditions  are independent 
of the chosen frame; if the frame comes from prolonged local coordinates, the term 
$Q_{kk}(t)$ is essentially the Hessian of the Lagrangian restricted to the coordinates $q_k^A$.

The aim of this paper is, starting from the classical identities of higher order calculus of variations, to {\em construct and study the projective curves whose 
	lack of self-intersection imply that $\calQ$ is positive definite.} Let us remark
that the absence of self-intersection of our Jacobi curves on $(a,b]$ will imply 
positive definiteness, but a simple step similar to sections 28 and 29.4 of \cite{gelfand-fomin} allows the passage from positive definite to {\em strongly} positive definite (that 
 is, $\calQ(h) \geq C|h|$ for some constant $C>0$ and an appropriate $C^k$-norm 
 for $h$).
 Thus the projective topology of 
the Jacobi curves (plus the approximation up to order 2 of the functional $\mathcal L$)
does indeed provide sufficient conditions for local minimality.

\subsection{Curves in the half - and divisible - Grassmannians and their rank} \label{previous}

As mentioned in the introduction, Jacobi curves in the context of higher-order 
variational problems will always be degenerate. In this section we will furnish the 
tools to quantify this degeneracy.

The  local geometry (as opposed 
to the global topology) of curves in Grassmann manifolds
can be studied in terms of 
frames spanning the given curve. This is the approach 
used in  \cite{alvarez-duran} and \cite{duran-peixoto}, where the reader 
can find details about the construction of local invariants. 

In this section we use this viewpoint to 
{\em rank} of a curve in the Grassmannian, which measures its degeneracy.

Recall that if $V$ is a real vector space, the tangent space of a 
Grassmann manifold $Gr(k,V)$ can be canonically 
identified with the quotient vector space $T_\ell Gr(k,V) \cong \Hom(\ell,V/\ell)$. 

\begin{defn}
Let $I$ be an interval and $\ell:I \to Gr(k,V)$ be a curve in a Grassmann manifold. The {\em rank} of $\ell$ at $t\in I$ is the rank of $\ell'(t)$ considered as an element of 
$\Hom(\ell(t),V/\ell(t))$.  
\end{defn}

We now fix a basis on $V$ so that the $V \cong \bbR^n$ and denote the Grassmannian 
by $Gr(k,n)$. Given a curve $\ell(t) \in Gr(k,n)$, we can lift it to a curve 
of $k$ linearly independent vectors $a_1(t),\dots a_k(t)$ in $\bbR^n$ that span $\ell(t)$. This we can codify as a $n\times k$ matrix $\mathcal A(t)$ whose columns 
are the vectors $a_i(t)$, $\mathcal A(t) = (a_1(t)| \dots | a_k(t))$ (the vertical bars denote juxtaposition) We have

\begin{prop} \label{rank-in-frames}
	In the situation above, the rank of $\ell(t)$ is given by  $\rank(\mathcal A(t) | \mathcal A'(t)) - k$.
\end{prop}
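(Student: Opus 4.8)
The plan is to convert the intrinsic definition of rank into the frame picture by first writing $\ell'(t)$ explicitly in terms of the lift $\mathcal{A}(t)$. I would recall that, under the canonical identification $T_{\ell(t)}Gr(k,V)\cong\Hom(\ell(t),V/\ell(t))$, the velocity $\ell'(t)$ is the homomorphism sending $a_i(t)\mapsto a_i'(t)+\ell(t)$, where $\pi_t\colon V\to V/\ell(t)$ denotes the quotient projection. To justify this representation I would record the change-of-frame computation: if $b_i(t)=\sum_j g_{ij}(t)\,a_j(t)$ is another smooth frame with $(g_{ij}(t))$ invertible, then differentiating gives $b_i'(t)=\sum_j g_{ij}'(t)a_j(t)+\sum_j g_{ij}(t)a_j'(t)$, and since $a_j(t)\in\ell(t)$ the first sum dies modulo $\ell(t)$; thus $b_i'(t)+\ell(t)=\sum_j g_{ij}(t)\bigl(a_j'(t)+\ell(t)\bigr)$, so the two frames induce the same homomorphism $\ell(t)\to V/\ell(t)$ up to the invertible change of basis $(g_{ij}(t))$ on the source. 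This confirms both that the map is well defined and that it agrees with the canonical identification, and in particular it has a frame-independent rank.

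With this in hand the computation is pure linear algebra. The image of $\ell'(t)$ inside $V/\ell(t)$ is exactly $\pi_t\bigl(\mathrm{span}\{a_1'(t),\dots,a_k'(t)\}\bigr)$, so
\[
\rank\bigl(\ell'(t)\bigr)=\dim\pi_t\bigl(\mathrm{span}\{a_1'(t),\dots,a_k'(t)\}\bigr)
=\dim\bigl(\mathrm{span}\{a_1'(t),\dots,a_k'(t)\}+\ell(t)\bigr)-k,
\]
where the last equality is the rank-nullity bookkeeping for the surjection $\pi_t$ restricted to $\mathrm{span}\{a_i'(t)\}+\ell(t)$, whose kernel is precisely $\ell(t)$ of dimension $k$.

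It then remains to recognize the subspace in the middle term as a column space. Since $\ell(t)=\mathrm{span}\{a_1(t),\dots,a_k(t)\}$ is the span of the columns of $\mathcal{A}(t)$, and $\mathrm{span}\{a_1'(t),\dots,a_k'(t)\}$ is the span of the columns of $\mathcal{A}'(t)$, the sum $\mathrm{span}\{a_i'(t)\}+\ell(t)$ is exactly the column space of the juxtaposed matrix $(\mathcal{A}(t)\,|\,\mathcal{A}'(t))$, whose dimension is $\rank(\mathcal{A}(t)\,|\,\mathcal{A}'(t))$. Substituting into the displayed formula yields $\rank(\ell'(t))=\rank(\mathcal{A}(t)\,|\,\mathcal{A}'(t))-k$, as claimed.

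I do not expect a serious obstacle here; the statement is essentially a dictionary entry. The only step requiring genuine care is the first one, namely pinning down how the abstract identification $T_{\ell}Gr(k,V)\cong\Hom(\ell,V/\ell)$ is realized by the frame-derivative $a_i\mapsto a_i'+\ell$ and verifying its frame-independence, since everything downstream is an elementary dimension count. Once that normalization is fixed, the proof is immediate.
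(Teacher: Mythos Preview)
Your proof is correct. It differs from the paper's argument in how the identification $T_\ell Gr(k,V)\cong\Hom(\ell,V/\ell)$ is made concrete: you work directly with the quotient, defining $\ell'(t)$ on a frame by $a_i(t)\mapsto a_i'(t)+\ell(t)$ and checking frame-independence by the change-of-basis computation, after which the rank statement is a one-line dimension count. The paper instead chooses a smooth curve of idempotents $\rho(t)$ with $\image\rho(t)=\ell(t)$, uses that $\rho'(t)$ interchanges $\image\rho(t)$ and $\ker\rho(t)\cong V/\ell(t)$, and differentiates the identity $\rho(t)\mathcal A(t)=\mathcal A(t)$ to strip off the component of $\mathcal A'(t)$ lying in $\ell(t)$; the rank then splits because the remaining columns $\rho'(t)\mathcal A(t)$ lie in a complement of $\ell(t)$. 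Your route is more elementary and self-contained, needing only the quotient description; the paper's projection-based description has the minor advantage of giving an explicit splitting $V=\image\rho(t)\oplus\ker\rho(t)$ in which the two blocks of columns live in complementary summands, but for the present statement both arguments are equally short.
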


\begin{proof}
Let us recall concretely how the identification $T_\ell Gr(k,V) \cong \Hom(\ell,V/\ell)$
is done, in a way that is useful for computations involving the frame once a basis of 
$V$ is fixed: given a curve $\ell(t)$, choose a curve of idempotent matrices
$\rho(t)$ representing projections 
(not necessarily orthogonal, since we do not assume any Euclidean structure in $V$) 
such that the image of $\rho(t)$ is $\ell(t)$. Then we have:

\begin{enumerate}
	\item The  derivative of a curve of projections $\rho(t)$ is a curve of endomorphisms that maps $\image(\rho(t))$ into $\ker(\rho(t))$ and vice-versa.
	\item The quotient $V/\ell(0)$ can be identified with $\ker \rho(0)$.
\end{enumerate}

Then, the derivative $\rho'(t_0)$ provides a map 
form $\image(\rho(t_0)) = \ell(t_0)$ into $\ker \rho(t_0) \cong V/\ell(t_0)$. 
It is straightforward, after unraveling the identifications, that this map is 
independent of the curve of projections chosen to represent $\ell(t)$. 

Fix one such adapted curve of projection matrices $\rho(t)$. Then, since 
$\rho(t) \mathcal A(t) = \mathcal A(t)$, we have 
\[
\mathcal A'(t) = \rho'(t)\mathcal A (t) + \rho(t) \mathcal A'(t) \, .
\] 
Juxtaposing and computing the rank,  we get 
\[
\rank (\mathcal A (t) |\mathcal A'(t) )= 
\rank (\mathcal A (t) | \rho'(t)\mathcal A (t) + \rho(t) \mathcal A'(t))  = 
\rank (\mathcal A (t) | \rho'(t) \mathcal A(t)) \, ,
\]
where the last equality follows from  
$\image(\rho(t)) = \ell(t)$ and therefore the columns  $\rho(t) \mathcal A'(t)$ are in
the span of the columns before the vertical bar. Now since the columns of $\mathcal A(t)$ are in 
the image of $\rho(t)$ and the columns $\rho'(t)\mathcal A (t)$ are in the kernel of 
$\rho(t)$, it follows that the rank of the right-hand side of the equation above 
is $k+\rank \rho'(t)\mathcal A (t) = k + \rank \ell(t)$.
\end{proof}

In the case of the half-Grassmannian $Gr(k,2k)$  a curve 
$\ell(t)$ is called {\em fanning} if it has maximal rank, that is the rank of 
$\ell(t)$ is $k$ and $\ell'(t): \ell(t) \to \bbR^{2k}/\ell(t)$ is invertible. This (generic) 
condition is the 
starting point of the invariants defined in \cite{alvarez-duran}: the invariantly 
defined nilpotent endomorphism $\bf{F}(t)$ of $\bbR^{2k}$ given by the composition 
$\bbR ^{2k} \to \bbR^{2k}/\ell \stackrel{(\ell'(t))^{-1}}{\to} \ell(t) \hookrightarrow \bbR^{2k}$. 

All of the invariants studied in \cite{alvarez-duran} for the Grassmannian can be 
used for the local study of curves in the Lagrangian Grassmannian; there is
a single extra {\em discrete} invariant, the {\em signature}, that solves 
the equivalence problem for curves in the Lagrangian Grassmannian.

It is important to note that, in the case of higher order variational problems, 
we will construct a curve of Lagrangian subspaces whose self-intersections 
(or, rather, lack thereof) control
the positivity of the second variation. However, in the context of higher order 
variational problems, this curve is {\em never} fanning in the sense of \cite{alvarez-duran}
thus a different approach to the the invariants must be taken. One possibility 
is using the theory developed by \cite{zelenko}.
An alternative approach could be the use of curves $\ell(t)$ in a {\em divisible Grassmannian} $Gr(k,nk)$ and 
the invariant theory that has been developed in \cite{duran-peixoto}. This is 
especially adapted to the projective geometry of higher order linear differential 
equations \cite{wil}, such as the Euler-Lagrange equation of a quadratic Lagrangian of higher order. The 
concept of fanning curve in this case requires the $(n-1)$-jet of the curve: if 
$\ell(t)$ is a curve in a divisible Grassmannian $Gr(k,nk)$  spanned as above 
by the columns of a $nk \times k$ matrix $\mathcal A(t)$, we say that $\ell$ is 
fanning if the matrix $(\mathcal A(t) | \mathcal A'(t) | \dots | \mathcal A^{(n-1)}(t))$ is invertible. This is, again, a generic condition on smooth 
curves in a divisible Grassmannian, and that is satisfied in our case of 
{\em isotropic} curves whose prolongation gives the curve in the Lagrangian 
Grassmannian.

 An important construction for these
curves is the {\em canonical linear flag} (compare \cite{zelenko})
\[
Span\{A(t)\} \subset Span\{A(t),\dot{A}(t)\}\subset  \dots \, ,
\]
which only depends on the curve $\ell(t)$ and its successive jets. We refer to each step of this sequence of inclusions as a {\em prolongation} of the curve $\ell$. 

Fanning curves 
in the divisible Grassmannian satisfy that the canonical linear flag jumps 
dimension by $k$ at each stage up to the forced stabilization at maximal 
dimension $nk$.

\section{The symplectic projective geometry of higher order quadratic functionals: the one dimensional case} \label{one-dimension}

Let $a<b \in \bbR$. Consider the general quadratic $k$-th order functional
\[
\calQ[h] = \int_a^b  \sum_{1\leq i \leq j \leq k} Q_{ij}(t)h^{(i)}(t) h^{(j)}(t) \, dt,
\]
defined on the subspace of $C^\infty([a,b])$ consisting of functions  vanishing up to order $k-1$ at $a$ and $b$.  
By repeated integration by parts, $\calQ$ can be written as\footnote{In general, this is not possible in the higher dimensional case. Compare footnote 18 in chapter 5 of  \cite{gelfand-fomin}.} 

\begin{equation} \label{eq:funcional_quadratico}
\calQ[h] = \int_a^b   P_0(t)h(t)^2 + \dots P_1(t)(\dot h(t))^2 + \dots P_k(t)(h^{(k)})^2\, dt .
\end{equation}

The Euler-Lagrange equation of $\calQ$ is given by 
\begin{equation}\label{eq:jacobi1}
P_0 h-\frac{d}{dt}(P_1\dot{h})+\ldots+
(-1)^{k}\frac{d^{k}}{dt^{k}}(P_{k}h^{(k)})=0.
\end{equation}

We call this equation the {\em Jacobi equation} of the functional $\calQ$, or, more generally, of a functional whose Hessian is given by $\calQ$. The Jacobi equation 
is an actual differential equation of order $2k$ if $P_k(t) \neq 0$ for all $t\in [a,b]$;  we shall assume the {\em strict Legendre condition:} $P_k(t)$ is actually 
positive on the interval $[a,b]$.

\subsection{Eswaran identities} \label{sec-eswaran}
Let  $\{\sigma_i\, , \, i = 1, \ldots, k\}$ be a set of linearly independent solutions of \eqref{eq:jacobi1} satisfying
\begin{equation}\label{eq:condicoes_iniciais1}
\sigma_i^{(j)}(a) = 0, \text{ for } i = 1,2,\ldots, k, \text{ and } j = 0,1,2,\ldots, k-1,
\end{equation}
and consider the  ``sub-Wronskian"
\begin{equation}\label{eq:subWronskian}
W[\sigma_1, \sigma_2, \ldots, \sigma_k](t) = \det
\begin{pmatrix}
\sigma_1 & \sigma_2 & \cdots & \sigma_k\\
\dot{\sigma_1} & \dot{\sigma_2} & \cdots & \dot{\sigma_k}\\
\vdots & \vdots & \cdots & \vdots\\
{\sigma_1}^{(k-1)} & {\sigma_2}^{(k-1)} & \cdots & {\sigma_k}^{(k-1)}
\end{pmatrix}.
\end{equation}

\begin{defn}
	A point $t^\ast \in (a,b]$ is said to be {\em conjugate} to $a$ if 
	$W[\sigma_1, \sigma_2, \ldots, \sigma_k](t^\ast) = 0$.
\end{defn}

We have have then the following theorem:

\begin{thm}(Eswaran) \label{positivity-dim1}
	If there are no conjugate points to $a$ on $(a,b]$, then the functional $\calQ$ is 
	positive definite, that is, $\calQ(h) \geq 0$ and $\calQ(h)=0$ only when $h\equiv 0$. 
\end{thm}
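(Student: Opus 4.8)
The plan is to derive an Eswaran-type pointwise identity that rewrites the integrand of $\calQ$ as a single perfect square weighted by $P_k$, plus an exact differential, and then to integrate. The one hypothesis that makes everything run is the no-conjugate-point condition, which gives $W[\sigma_1,\dots,\sigma_k](t)\neq 0$ for all $t\in(a,b]$; this keeps every quantity built from the fundamental solutions $\sigma_1,\dots,\sigma_k$ nonsingular on the half-open interval.

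First I would isolate the mechanism in the first-order case $k=1$. With $\sigma=\sigma_1$ and the Riccati substitution $v=\dot\sigma/\sigma$, the Jacobi equation $\frac{d}{dt}(P_1\dot\sigma)=P_0\sigma$ gives the pointwise identity $P_0h^2+P_1\dot h^2=P_1(\dot h-vh)^2+\frac{d}{dt}(P_1vh^2)$. Here $P_1>0$ makes the bracket a genuine square, the Jacobi equation is exactly the relation forcing the cross terms to assemble into a total derivative, and $\sigma(t)\neq0$ on $(a,b]$ keeps $v$ finite. The general identity is the higher-order analogue: the natural monic operator whose kernel is $\mathrm{span}\{\sigma_1,\dots,\sigma_k\}$ is $\Lambda[h]=W[\sigma_1,\dots,\sigma_k,h]\,/\,W[\sigma_1,\dots,\sigma_k]$, where the numerator is the extended $(k+1)\times(k+1)$ Wronskian, and I would aim to establish
\[
\sum_{j=0}^{k}P_j\,(h^{(j)})^2 \;=\; P_k\,\Lambda[h]^2 \;+\; \frac{d}{dt}\,B[h],
\]
with $B[h]$ a bilinear boundary form in $h,\dot h,\dots,h^{(k-1)}$. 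Proving this identity is the core of the argument and is precisely the combinatorial content of the classical Picone--Eswaran identities the paper invokes: one tracks repeated integration by parts and checks, using that every $\sigma_i$ solves the Jacobi equation, that all non-square contributions collect into $\frac{d}{dt}B[h]$.

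Granting the identity, I would integrate over $[a,b]$. Since $h$ vanishes together with its first $k-1$ derivatives at both endpoints, $B[h]$ vanishes at $b$; the contribution at $a$ needs a limiting argument, because the $\sigma$-weights are singular there (the $\sigma_i$ vanish to order $k-1$, so $W(a)=0$), but $h$ and its derivatives vanish fast enough that $B[h]\to0$ as $t\to a^+$. This yields $\calQ[h]=\int_a^b P_k\,\Lambda[h]^2\,dt\geq0$. For definiteness, $\calQ[h]=0$ forces $\Lambda[h]\equiv0$, hence $h\in\ker\Lambda=\mathrm{span}\{\sigma_1,\dots,\sigma_k\}$, so $h=\sum_i c_i\sigma_i$; imposing the endpoint data at $b$ and using $W(b)\neq0$ then forces every $c_i=0$ and $h\equiv0$.

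The step I expect to be the main obstacle is the endpoint analysis at $a$: the square decomposition is only manifestly valid on $(a,b]$ where $W$ is invertible, so one must confirm both that the interior identity extends continuously to $a$ and that $B[h]$ has vanishing limit there despite the high-order singularity of $\Lambda$ and of the weights appearing in $B$. The explicit production of $\Lambda$ and $B$ in arbitrary order $k$ is the secondary difficulty, but it is mechanical once the recursive completing-of-the-square is set up.
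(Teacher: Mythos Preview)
Your proposal is correct and follows essentially the same route as the paper: invoke the Eswaran (Picone-type) identity to rewrite the integrand as $P_k\bigl(W[h,\sigma_1,\dots,\sigma_k]/W[\sigma_1,\dots,\sigma_k]\bigr)^2$ plus an exact derivative, integrate, and use the endpoint vanishing of $h,\dots,h^{(k-1)}$ to kill the boundary term; the paper simply quotes the identity (its Theorem~\ref{eswaran-identities}) and says Theorem~\ref{positivity-dim1} follows immediately. Your extra care about the limit $t\to a^+$ (where $W(a)=0$) is a point the paper leaves implicit in its statement that $R(t^\ast)=0$ whenever all $h^{(i)}(t^\ast)$ vanish.
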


The main idea behind Eswaran's result is the following identity
which allow us, under the disconjugacy hypothesis, to write the quadratic functional 
$\calQ$ as a perfect square plus a total differential:
\begin{thm}(Eswaran, following Eastham) \label{eswaran-identities}
	Let $\sigma_1,\ldots, \sigma_k$ a linearly independent set of solutions of \eqref{eq:jacobi} satisfying \eqref{eq:condicoes_iniciais1} such that the  sub-Wronskian satisfies $W[\sigma_1,\sigma_2,\ldots,\sigma_k] \neq 0$ in 
	the interval $(a;b)$. Then for any $h\in C^k([a,b]),$ we have the identity
	\begin{equation}\label{eq:identidade_picone1}
	\sum_{l=0}^k P_l(t) (h^{(l)})^2 = P_k\left(\frac{W[h,\sigma_1,\sigma_2,\ldots,\sigma_k]}{W[\sigma_1,\sigma_2,\ldots,\sigma_k]}\right)^2 + \frac{dR}{dt},
	\end{equation}
	where 
	\[
	W[h,\sigma_1,\sigma_2,\ldots,\sigma_k] = \det
	\begin{pmatrix}
	h & \sigma_1 & \sigma_2 & \cdots & \sigma_k\\
	\dot{h} &\dot{\sigma_1} & \dot{\sigma_2} & \cdots & \dot{\sigma_k}\\
	\vdots & \vdots & \vdots & \cdots & \vdots\\
	h^{(k-1)} & {\sigma_1}^{(k-1)} & {\sigma_2}^{(k-1)} & \cdots & {\sigma_k}^{(k-1)}\\
	h^{(k)} & {\sigma_1}^{(k)} & {\sigma_2}^{(k)} & \cdots & {\sigma_k}^{(k)}
	\end{pmatrix},
	\]
	where $R$ is a rational expression in $\sigma_i$, $h^{(i)}$ and $P_i$, such that $R(t^\ast) = 0$ if $h^{(i)}(t^\ast) = 0$ for all $i=0,1,2,\ldots k-1$.
\end{thm}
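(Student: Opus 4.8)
The plan is to reinterpret the claimed pointwise identity as the combination of an \emph{algebraic factorization} of the Jacobi operator with two applications of Green's formula. Write $D=d/dt$ and let
\[
\mathcal{J}[h] = \sum_{l=0}^{k} (-1)^l D^l\big(P_l\, h^{(l)}\big)
\]
be the formally self-adjoint order-$2k$ Euler--Lagrange operator of \eqref{eq:jacobi1}, so that $\mathcal{J}[\sigma_i]=0$ for every $i$. Expanding $W[h,\sigma_1,\dots,\sigma_k]$ along its first column shows that
\[
L[h] := \frac{W[h,\sigma_1,\dots,\sigma_k]}{W[\sigma_1,\dots,\sigma_k]}
\]
is a \emph{monic} linear differential operator of order $k$ (the cofactor of $h^{(k)}$ is exactly the sub-Wronskian in the denominator), and that $L[\sigma_i]=0$ since a repeated column makes the numerator vanish. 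Thus $L$ is the monic order-$k$ operator with kernel $\operatorname{span}\{\sigma_1,\dots,\sigma_k\}$; its coefficients are rational in the $\sigma_i^{(j)}$ with denominator $W[\sigma_1,\dots,\sigma_k]$, hence smooth wherever the latter is nonzero. The whole identity follows once we prove the factorization $\mathcal{J}=L^{*}P_kL$, where $L^{*}$ is the formal adjoint of $L$.

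The factorization is the heart of the matter, and I would establish it by a self-adjointness-plus-dimension count rather than by quoting a disconjugacy theorem. Both $\mathcal{J}$ and $L^{*}P_kL$ are formally self-adjoint of order $2k$, and inspection of leading terms (recall $(D^k)^{*}=(-1)^kD^k$) shows both have top coefficient $(-1)^kP_k$. Hence $\mathcal{D}:=\mathcal{J}-L^{*}P_kL$ is formally self-adjoint of order at most $2k-1$; but a nonzero self-adjoint operator of order $N$ has leading coefficient $a_N=(-1)^Na_N$, forcing $N$ even, so $\operatorname{ord}\mathcal{D}\leq 2k-2$. Moreover $\mathcal{D}[\sigma_i]=\mathcal{J}[\sigma_i]-L^{*}P_kL[\sigma_i]=0$ because $L[\sigma_i]=0$. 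Suppose $\mathcal{D}\neq0$, of order $2m$ with $m\leq k-1$. A solution of $\mathcal{D}$ is determined by its jet of order $2m-1$ at $a$; since each $\sigma_i$ vanishes to order $k-1\geq m$ at $a$ by \eqref{eq:condicoes_iniciais1}, the solutions vanishing to order $k-1$ at $a$ form a space of dimension $\max(0,\,2m-k)\leq k-2<k$. This contradicts the linear independence of $\sigma_1,\dots,\sigma_k$, all lying in that space. Therefore $\mathcal{D}=0$, and this step is where I expect the real work to sit.

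Granting the factorization, I obtain the identity by integrating by parts twice at the level of integrands. For each $l$ the telescoped Lagrange identity for $D^l$ gives $(-1)^l h\,D^l(P_l h^{(l)})=P_l(h^{(l)})^2+\frac{d}{dt}c_l$ with $c_l=(-1)^l\sum_{j=0}^{l-1}(-1)^j h^{(j)}(P_l h^{(l)})^{(l-1-j)}$; summing over $l$ yields $\sum_l P_l(h^{(l)})^2=h\,\mathcal{J}[h]+\frac{d}{dt}C_1$ with $C_1=-\sum_l c_l$. On the other hand the bilinear concomitant of $L$ gives $(Lh)(P_kLh)=h\,L^{*}(P_kLh)+\frac{d}{dt}C_2$, and the factorization turns $L^{*}(P_kLh)$ into $\mathcal{J}[h]$, so that $P_k(Lh)^2=h\,\mathcal{J}[h]+\frac{d}{dt}C_2$. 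Subtracting the two expressions eliminates $h\,\mathcal{J}[h]$ and produces exactly \eqref{eq:identidade_picone1} with $R=C_1-C_2$.

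Finally I read off the stated properties of $R$. The concomitants $C_1$ and $C_2$ are assembled, respectively, from the $P_l$ and from the Wronskian-ratio coefficients of $L,L^{*}$ together with $P_k$, so $R$ is rational in the $\sigma_i^{(j)}$, the $h^{(i)}$ and the $P_i$, as claimed. Every term of $C_1$ carries a factor $h^{(j)}$ with $0\leq j\leq l-1\leq k-1$, and every term of the order-$k$ concomitant $C_2$ carries a factor $h^{(i)}$ with $0\leq i\leq k-1$; hence $R(t^{*})=0$ whenever $h^{(i)}(t^{*})=0$ for all $i=0,\dots,k-1$. The one point requiring care is that $W[\sigma_1,\dots,\sigma_k]$ vanishes at $a$ (all the $\sigma_i$ vanish to order $k-1$ there), so $L$ and the identity are a priori valid only on the open interval $(a,b)$; the vanishing property of $R$ just established is precisely what later allows the identity to be integrated up to the endpoints in the proof of Theorem \ref{positivity-dim1}.
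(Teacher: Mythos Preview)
The paper does not prove this theorem; it quotes it from Eastham and Eswaran and then, in Section~4, proves an $n$-dimensional analogue by passing to the Hamiltonian side and establishing the generalized Picone identity (Theorem~\ref{generalizedPiconeTheorem}) directly by differentiation of $y^T Z Y^{-1} y$. Your route---factor $\mathcal J = L^{*}P_kL$ and then integrate by parts twice---is a genuinely different, operator-theoretic argument, and the reduction of the identity to the factorization is clean and correct.

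The gap is in your proof of the factorization. You argue that $\mathcal D=\mathcal J-L^{*}P_kL$ is self-adjoint of order $2m\le 2k-2$ and then invoke a dimension count \emph{at $t=a$}: ``solutions of $\mathcal D$ vanishing to order $k-1$ at $a$ form a space of dimension $\max(0,2m-k)$''. But you yourself note at the end that $W[\sigma_1,\dots,\sigma_k](a)=0$; consequently the coefficients of $L$, of $L^{*}$, and hence of $\mathcal D$, are only defined on $(a,b)$ and are a~priori singular at $a$. Existence/uniqueness for $\mathcal D$ at $a$ is therefore unavailable, and the jet-counting argument does not apply. On any interior subinterval where the top coefficient of $\mathcal D$ is nonzero, all you get from the $\sigma_i$ is $k\le 2m$, which is compatible with $m\le k-1$ as soon as $k\ge 2$; so without the endpoint information the count gives no contradiction.

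The standard repair is to replace the endpoint argument by the \emph{isotropy} of the chosen solutions with respect to the Lagrange bracket of $\mathcal J$: the bilinear concomitant $\{\sigma_i,\sigma_j\}_{\mathcal J}$ is constant for solutions and vanishes at $a$ by \eqref{eq:condicoes_iniciais1}, hence $\{\sigma_i,\sigma_j\}_{\mathcal J}\equiv 0$ on all of $(a,b)$. First divide: since $L$ is monic one has $\mathcal J=ML+R$ with $\mathrm{ord}\,R<k$, and $R\equiv 0$ because $R[\sigma_i]=0$ and the nonvanishing sub-Wronskian makes the first $k$ columns of $(\sigma_i^{(j)})$ independent at every point of $(a,b)$. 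Then use self-adjointness of $\mathcal J$ together with the isotropy $\{\sigma_i,\sigma_j\}_{\mathcal J}=0$ to force $M=L^{*}P_k$ (equivalently, to kill the order-$\le k-1$ operator $N=M-L^{*}P_k$). This is exactly the role played in the paper's $n$-dimensional proof by Lemma~\ref{hipoPicone} (the relation $Y^TZ-Z^TY=0$), which is what makes $ZY^{-1}$ symmetric and drives the Picone computation; in the one-dimensional operator language it is the missing ingredient that lets the factorization go through on the open interval without ever touching the singular endpoint.
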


Thus, under the hypothesis  of theorem \ref{eswaran-identities}, we can rewrite  
$\calQ$ as 
\[
\calQ (h) = \int_a^b \sum_{l=0}^k P_l(t) h^{(l)} dt = \int_a^b \left(P_k\left(\frac{W[h,\sigma_1,\sigma_2,\ldots,\sigma_k]}{W[\sigma_1,\sigma_2,\ldots,\sigma_k]}\right)^2 + \frac{dR}{dt}\right) dt,
\]
from which theorem \ref{positivity-dim1} immediately follows. 

These kind of identities were studied by Cimino and Picone \cite{cimino,picone} in the 
order one case (second order Euler-Lagrange equation),  Leighton and Kreith 
\cite{leighton,kreith}
for second order case (fourth order Euler-Lagrange equation), and in for general order 
(one-dimensional) variational problems by 
Eastham \cite{eastham}.

Easwaran \cite{eswaran} uses the identity developed in \cite{eastham} to attain the minimality conditions in the one dimensional case. Coppel \cite{coppel} also develops the minimality conditions for extemals of higher order one-dimensional variational problems assisted by the Legendre transform, in order to be in the context of linear Hamiltonian systems. In section \ref{n-dimension}, we observe that, by being careful about the order of multiplication of certain matrices,  Coppel's approach can be extended to arbitrary finite dimensional problems.

\subsection{Jacobi curves}

Let us note that theorem \ref{positivity-dim1} is projective in two senses: first,
if we choose a different set of linearly independent solutions, say 
$\eta_i(t) = \sum a_{ij}\sigma_j(t)$ for some constant invertible $k\times k$ matrix, 
the conjugacy condition is the same. This means that the conjugacy condition depends 
only on the subspace (of the space of all solutions) defined by the vanishing of 
the first $k$ derivatives, which we call the {\em vertical} subspace. 

More importantly, we do not need to have  actual solutions but only their (simultaneous) projective class:
if $\phi:[a,b] \to \bbR$ is a never-vanishing differentiable function  and 
we substitute each $\sigma_i(t)$ by $\eta_i(t)=\phi(t)\sigma_i(t)$, 
then $W[\eta_1, \eta_2, \ldots, \eta_k](t)= \phi^k(t) 
W[\sigma_1, \sigma_2, \ldots, \sigma_k](t)$ and therefore their zeros coincide. 

This motivates us to consider the following moving frame in $\bbR^{2k}$: let 
$\sigma_1(t),\dots, \sigma_k(t),\sigma_{k+1}(t), \dots , \sigma_{2k}(t)$ be
linearly independent solutions of the Euler-Lagrange equations \eqref{eq:jacobi}, where
the first $k$ solutions vanish up to order $k-1$ at $t=a$, and 
\[
\mathcal{C}(t) = \begin{pmatrix} \sigma_1(t) \\ \vdots\\  \sigma_k(t) \\ \sigma_{k+1}(t) \\ \vdots \\ \sigma_{2k}(t) \end{pmatrix} \, .
\]

Let now $p(t)$ denote the class of $\mathcal C$ under the projection to $\bbR P^{2k-1}$. We have 

\begin{prop} \label{is_fanning}
	The curve $p(t)$ is fanning as a curve in the divisible Grassmannian.
\end{prop}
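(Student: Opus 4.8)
The plan is to unwind the definition of ``fanning'' for the divisible Grassmannian given in Section~\ref{previous} into an explicit determinantal condition, and then to recognize that condition as the non-vanishing of a classical Wronskian. In the one-dimensional case at hand the curve $p(t)$ lives in $\bbR P^{2k-1} = Gr(1,2k)$, which we view as a divisible Grassmannian $Gr(k',n'k')$ with subspace dimension $k'=1$ and $n' = 2k$, so that the ambient dimension is $n'k' = 2k$. The spanning matrix is then the single $2k \times 1$ column $\mathcal{C}(t)$, and the fanning criterion of Section~\ref{previous} asks that the $2k \times 2k$ matrix
\[
\big(\mathcal{C}(t) \mid \dot{\mathcal{C}}(t) \mid \cdots \mid \mathcal{C}^{(2k-1)}(t)\big)
\]
be invertible for every $t \in [a,b]$.

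First I would note that, directly from the definition of $\mathcal{C}$, the $(i,j)$ entry of this matrix is $\sigma_i^{(j-1)}(t)$, so that it is precisely the transpose of the Wronskian matrix of the solutions $\sigma_1, \ldots, \sigma_{2k}$. Since a matrix and its transpose have equal determinant, fanning of $p(t)$ is equivalent to the non-vanishing on $[a,b]$ of the full Wronskian $W[\sigma_1, \ldots, \sigma_{2k}](t)$.

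Next I would exploit the structure of the Jacobi equation \eqref{eq:jacobi1}. Upon expanding the top term $(-1)^k \frac{d^k}{dt^k}\big(P_k h^{(k)}\big)$, the coefficient of $h^{(2k)}$ is $(-1)^k P_k(t)$, which is nowhere zero on $[a,b]$ by the strict Legendre condition $P_k > 0$. Hence \eqref{eq:jacobi1} is a genuine linear ordinary differential equation of order $2k$ that can be normalized to have smooth coefficients and leading coefficient $1$. Abel's identity for the Wronskian of solutions of such an equation then shows that $W[\sigma_1, \ldots, \sigma_{2k}]$ is either identically zero or nowhere zero on $[a,b]$; as the $\sigma_i$ are linearly independent solutions it is nowhere zero. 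Together with the previous paragraph this gives the claim.

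The argument is essentially routine once the identifications are in place; the one point deserving care is matching the divisible structure $Gr(k',n'k')$ to the present setting, so that the number of prolongations appearing in the fanning matrix is exactly $2k - 1$, and confirming that it is precisely the strict Legendre condition that guarantees the equation has order $2k$ and hence that its Wronskian cannot vanish. This bookkeeping is the step I would verify most carefully.
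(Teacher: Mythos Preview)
Your proposal is correct and follows essentially the same route as the paper: both identify the juxtaposed matrix $(\mathcal{C}(t)\mid \dot{\mathcal{C}}(t)\mid\cdots\mid \mathcal{C}^{(2k-1)}(t))$ with the Wronskian of the $2k$ linearly independent solutions of the Jacobi equation and conclude it is nonvanishing. Your version is simply more explicit about the bookkeeping (the divisible-Grassmannian parameters, the transpose, and the use of the strict Legendre condition together with Abel's identity), whereas the paper states the Wronskian identification in a single sentence.
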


\begin{proof}
	This follows directly from the fact that the determinant of the juxtaposed matrix 
$(\mathcal{C}(t) | \mathcal{C}'(t) | \dots | \mathcal{C}^{2k-1}(t))$ is the Wronskian 
of the $2k$ linearly independent solutions of the Jacobi equation. 
	
\end{proof}

We now consider the  frame obtained by the $k-1$ prolongation of $\mathcal C$, 
\[
\mathcal{A}(t) = 
\begin{pmatrix} \sigma_1(t) & \dot\sigma_1(t) & \dots & \sigma_1^{(k-1)} \\
\vdots               & \vdots       &        & \vdots \\
\sigma_k(t) & \dot\sigma_1(t) & \dots & \sigma_k^{(k-1)} \\
\sigma_{k+1}(t) & \dot\sigma_{k+1}(t) & \dots & \sigma_{k+1}^{(k-1)} \\ 
\vdots               & \vdots       &        & \vdots \\
\sigma_{2k}(t) & \dot\sigma_{2k}(t) & \dots & \sigma_{2k}^{(k-1)} \end{pmatrix},
\]

\begin{defn}
	The {\em Jacobi curve} $\ell(t)$ is the space spanned by the columns of $\mathcal A(t)$.
\end{defn}
It follows from theorem \ref{is_fanning} that $\ell(t)$ is $k$-dimensional, that is, 
$\ell$ is a curve in th half-Grassmannian $Gr(k,2k)$. Now if we define the {\em vertical space} $\mathcal V \subset \bbR^{2k}$ as the vectors having vanishing first $k$-coordinates, we have that $\mathcal V = \ell(a)$ and 
theorem \ref{positivity-dim1} translates to 

\begin{thm}
	If $\ell(t^\ast) \cap \mathcal V = \{0\}$ for all $t^\ast \in (a,b]$, then the functional $\calQ(t)$ is positive definite. 
\end{thm}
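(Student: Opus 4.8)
The plan is to translate the geometric non-intersection hypothesis $\ell(t^\ast)\cap\mathcal{V}=\{0\}$ into the analytic disconjugacy condition ``no conjugate points to $a$ on $(a,b]$'', and then to invoke Eswaran's Theorem \ref{positivity-dim1}, which already delivers positive definiteness from disconjugacy. So the work is entirely in matching the two descriptions of conjugacy.

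First I would describe a general element of $\ell(t)$. Since $\ell(t)$ is spanned by the columns of the prolongation matrix $\mathcal{A}(t)$, every $v\in\ell(t)$ has the form $v=\mathcal{A}(t)c$ for some $c=(c_0,\ldots,c_{k-1})^\top\in\bbR^k$; explicitly, the $i$-th coordinate of $v$ is $\sum_{j=0}^{k-1}c_j\sigma_i^{(j)}(t)$. Because the curve $p(t)$ is fanning (Proposition \ref{is_fanning}), the full juxtaposed Wronskian matrix is invertible everywhere on $[a,b]$, so in particular $\mathcal{A}(t)$ has full column rank $k$ and the linear map $c\mapsto\mathcal{A}(t)c$ is injective. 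Hence $v=0$ if and only if $c=0$, and nonzero vectors of $\ell(t)$ correspond (up to scale) to nonzero coefficient vectors $c$.

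Next I would write out the membership condition $v\in\mathcal{V}$. By definition $\mathcal{V}$ consists of the vectors whose first $k$ coordinates vanish, so $v=\mathcal{A}(t)c$ lies in $\mathcal{V}$ precisely when $\sum_{j=0}^{k-1}c_j\sigma_i^{(j)}(t)=0$ for $i=1,\ldots,k$. This is a homogeneous linear system in $c$ whose coefficient matrix is the top $k\times k$ block of $\mathcal{A}(t)$, namely $\bigl(\sigma_i^{(j)}(t)\bigr)_{1\le i\le k,\,0\le j\le k-1}$, built from exactly those solutions $\sigma_1,\ldots,\sigma_k$ that vanish up to order $k-1$ at $a$. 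The determinant of this block is precisely the sub-Wronskian $W[\sigma_1,\ldots,\sigma_k](t)$ of \eqref{eq:subWronskian} (equal up to transposition, hence equal). Combining with the injectivity of the previous step, I obtain that $\ell(t^\ast)\cap\mathcal{V}\neq\{0\}$ if and only if this system admits a nontrivial solution, i.e.\ if and only if $W[\sigma_1,\ldots,\sigma_k](t^\ast)=0$, which is exactly the statement that $t^\ast$ is conjugate to $a$.

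Taking the contrapositive, the hypothesis $\ell(t^\ast)\cap\mathcal{V}=\{0\}$ for all $t^\ast\in(a,b]$ is equivalent to the absence of conjugate points to $a$ on $(a,b]$, whereupon Theorem \ref{positivity-dim1} gives positive definiteness of $\calQ$. The argument is essentially bookkeeping, and I do not expect a genuine obstacle; the only point demanding care is the bookkeeping itself, namely the identification of the top $k\times k$ block of $\mathcal{A}$ with the sub-Wronskian matrix up to transpose, together with the observation --- guaranteed by the fanning property --- that a rank drop of that top block is both necessary and sufficient for a nonzero intersection, so that the two notions of conjugacy coincide without any loss.
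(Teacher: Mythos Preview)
Your proposal is correct and follows exactly the approach the paper intends: the theorem is presented there simply as a translation of Eswaran's Theorem \ref{positivity-dim1}, and you have carefully spelled out that translation by identifying the top $k\times k$ block of $\mathcal{A}(t)$ with (the transpose of) the sub-Wronskian matrix and using the fanning property to ensure the injectivity of $c\mapsto\mathcal{A}(t)c$. The paper gives no further argument beyond the word ``translates,'' so your detailed verification is in fact more complete than what is written there.
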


In contrast to the first order variational problems, we have that the curve 
$\ell(t)$ is {\em not} fanning. Indeed, 

\begin{prop}
	The rank of the curve $\ell(t)$ is one. 
\end{prop}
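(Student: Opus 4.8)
The plan is to reduce the statement to a concrete rank computation via Proposition \ref{rank-in-frames}, which asserts that the rank of $\ell(t)$ equals $\rank(\mathcal{A}(t)\,|\,\mathcal{A}'(t)) - k$. First I would record the structural observation that the prolongation matrix $\mathcal{A}(t)$ is built precisely from the successive derivatives of the single column $\mathcal{C}(t)$: its $j$-th column is $\mathcal{C}^{(j-1)}(t)$, so that $\mathcal{A}(t) = (\mathcal{C}(t)\,|\,\mathcal{C}'(t)\,|\,\cdots\,|\,\mathcal{C}^{(k-1)}(t))$. This is the defining feature of a prolonged frame, and it is what will force the rank to be small.

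Next I would differentiate columnwise. Since differentiating the entry $\sigma_i^{(j-1)}$ yields $\sigma_i^{(j)}$, the derivative $\mathcal{A}'(t)$ has $j$-th column $\mathcal{C}^{(j)}(t)$, i.e. $\mathcal{A}'(t) = (\mathcal{C}'(t)\,|\,\cdots\,|\,\mathcal{C}^{(k)}(t))$. Juxtaposing, the matrix $(\mathcal{A}(t)\,|\,\mathcal{A}'(t))$ has columns $\mathcal{C}, \mathcal{C}', \ldots, \mathcal{C}^{(k-1)}, \mathcal{C}', \ldots, \mathcal{C}^{(k)}$; the two blocks overlap in the $k-1$ vectors $\mathcal{C}', \ldots, \mathcal{C}^{(k-1)}$, so the column span is exactly $\mathrm{Span}\{\mathcal{C}(t), \mathcal{C}'(t), \ldots, \mathcal{C}^{(k)}(t)\}$, spanned by only $k+1$ distinct vectors.

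It then remains to show that these $k+1$ vectors are linearly independent, so that $\rank(\mathcal{A}\,|\,\mathcal{A}') = k+1$ and hence the rank of $\ell$ equals $(k+1)-k = 1$. This is where I would invoke Proposition \ref{is_fanning}: fanning of $p(t)$ means the full Wronskian matrix $(\mathcal{C}\,|\,\mathcal{C}'\,|\,\cdots\,|\,\mathcal{C}^{(2k-1)})$ is invertible, so any subset of its columns --- in particular the first $k+1$ --- is linearly independent. Since the strict Legendre condition $P_k \neq 0$ keeps the Jacobi equation regular and the Wronskian of independent solutions nonvanishing throughout $[a,b]$, this independence holds at each $t$, so the rank is identically one.

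The computation is essentially forced once the prolongation structure is made explicit, and there is little genuine obstacle; the only point requiring care is the bookkeeping of which columns coincide after differentiation. Conceptually, the reason the rank drops from the generic value $k$ all the way down to $1$ is that every derivative except the newest one, $\mathcal{C}^{(k)}$, already lies in $\ell(t)$, so the infinitesimal motion of $\ell$ is carried entirely by the single new direction $\mathcal{C}^{(k)}(t) \bmod \ell(t)$.
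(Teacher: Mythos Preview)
Your argument is correct and follows essentially the same route as the paper: compute $\rank(\mathcal{A}\,|\,\mathcal{A}')$ using Proposition~\ref{rank-in-frames}, observe that all but one column of $\mathcal{A}'$ already appear among the columns of $\mathcal{A}$, and invoke Proposition~\ref{is_fanning} to conclude that the $k+1$ surviving columns $\mathcal{C},\mathcal{C}',\ldots,\mathcal{C}^{(k)}$ are linearly independent. The paper's proof is simply a terser statement of the same computation.
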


\begin{proof}
	It follows immediately by computing the rank of the extended matrix 
	$(\mathcal A(t) \, | \, \dot{\mathcal  A(t)})$; the columns $k,k+1,\dots,2k-1$ are 
	repeats of the columns $2,\dots,k$ but the first $k$ columns and the last 
	column are linearly independent by proposition \ref{is_fanning}.
\end{proof}

An important feature of the curve $p(t)$ is its {\em symplectic} behavior. This is better observed  
after applying the Legendre transform which will be done, in general dimension, in the next section. We shall see then that the canonical flag given by the successive prolongations of $p(t)$ are 
isotropic-Lagrangian-coisotropic, according to the dimension. 

\section{The symplectic projective geometry of higher order quadratic functionals: the general case} \label{n-dimension}

We now consider a quadratic functional 
\[
\calQ = \int_a^b L(t, \dot h(t), \ddot h(t), \dots h^{(k)}(t)) \, dt
\]
 where 
 \[
 L(t,h,\dot h, \dots h^{(k)})= \sum_{1\leq i \leq j \leq k} 
 h^{(i)\top}  M_{ij}(t)h^{(j)} \,
 \] 
and now $h:[a,b] \to \bbR^n$ is a vector-valued smooth function,
vanishing up to order $k-1$ at the ends of the interval, as in  section 
\ref{higher-order-manifolds}.  In general, non-commutativity will 
not let us transform $\calQ$ into a functional of the form \eqref{eq:funcional_quadratico}. We shall only assume that $M_{jj}(t)$ is symmetric (which is harmless since $ v^\top A v $ vanishes if $A$ is antisymmetric) 
and the {\em strong Legendre condition:} $M_{kk}(t)$, in addition to  being symmetric, 
{\em positive definite} for all $t\in [a,b]$. It is easy to see that by integrating 
by parts, we can make $M_{ij}(t) = 0$ if $j > i + 1$ or if $j < i$ , and then $L$ can be written 
as 
\[
L(t,h,\dot h, \dots h^{(k)}) = \frac{1}{2}\sum_{i = 0}^k {h^{(i)}}^T M_{ii} h^{(i)} + \sum_{i = 0}^{k-1}  {h^{(i)}}^T M_{i(i+1)} h^{(i+1)}, 
\]
where $M_{ii} = M_{ii}^T$ and $M_{kk}$ is positive definite.
This reduction 
greatly simplifies the computations. From now on we drop the independent variable $t$ 
from the notation. 

The Euler-Lagrange equation (which we again call  Jacobi, equation, if the 
quadratic functional is the Hessian of a functional along an extremal) is then 
\begin{equation}\label{eq:jacobi}
\frac{\partial {L}}{\partial q_0} - 
\frac{d}{dt} \left(\frac{\partial {L}}{\partial q_1}\right) + 
\frac{d^2}{dt^2} \left(\frac{\partial {L}}{\partial q_2}\right) + \ldots +
(-1)^{k}\frac{d^{k}}{dt^{k}} \left(\frac{\partial {L}}{\partial q_k} \right) = 0.
\end{equation}
Where the partial derivatives above can be easily obtained from the form of  ${L}$:
\begin{gather*}
\frac{\partial {L}}{\partial q_0} = M_{00} h + M_{01} \dot{h}, \\
\frac{\partial {L}}{\partial q_j} = M_{jj} h^{(j)} + M_{(j-1)j}^T h^{(j-1)} + M_{j(j+1)} h^{(j+1)},\, \text{ for } j = 1,\ldots, k-1, \\
\frac{\partial {L}}{\partial q_k} = M_{kk} h^{(k)} + M_{(k-1)k}^T h^{(k-1)} \, .
\end{gather*}

These relations can be written in the following form
\[
\begin{pmatrix} \frac{\partial L}{\partial q_0} \\ \vdots \\ \frac{\partial L}{\partial q_{k-1}}\end{pmatrix} = 
C(t) \begin{pmatrix} h \\ \vdots \\ h^{(k-1)} \end{pmatrix} + \begin{pmatrix} 0 \\ \vdots \\ M_{(k-1)k}M_{kk}^{-1}\frac{\partial L}{\partial q_k} \end{pmatrix},
\]
where $C$ is a curve of $kn \times kn$ matrices	that have the $M_{ij}$ blocks arranged as 
\[
C(t) = 
\begin{pmatrix}
M_{00} & M_{01} & 0 & 0 & \cdots & 0 \\
M_{01}^T & M_{11} & M_{12} & 0 & \cdots & 0 \\
0 & M_{12}^T & M_{22} & M_{23} & \cdots & 0  \\
\vdots & \vdots & \ddots & \ddots & \ddots & \vdots &\\
0 & \cdots & 0 & M_{(k-3)(k-2)}^T & M_{(k-2)(k-2)} & M_{(k-2)(k-1)}^T &\\
0 & \cdots & 0 & 0 & M_{(k-2)(k-1)}^T & \tilde{M}_{(k-1)(k-1)} 
\end{pmatrix},
\]
and $\tilde{M}_{(k-1)(k-1)} = M_{(k-1)(k-1)} - M_{(k-1)k}M_{kk}^{-1}M_{(k-1)k}^T$.
\subsection{The Legendre transform and Hamiltonian presentation}

As remarked as the end of section \ref{sec-eswaran},
a Hamiltonian version of the Jacobi equation,  using a higher dimensional extension of the methods of \cite{coppel}, will greatly simplify the computations needed to establish the 
Jacobi curve and its relationship with minimality. 
 
Consider then the {\em Legendre transform} 
$\text{Leg}:\mathbb{R}^{2kn}\rightarrow\mathbb{R}^{2kn}$ associated to $L$. The Legendre transform applied to the $(2k-1)$-jet of a curve $h$ is given by:
\[
\text{Leg}(h, \dot{h}, \ldots, h^{(k-1)}, h^{(k)}, \ldots, h^{(2k-1)}) = (y,z) 
\]
with
\[
y = \begin{pmatrix} h \\ \dot{h} \\ \ddot{h} \\ \vdots \\ h^{(k-1)} \end{pmatrix} \quad \text{and} \quad
z = \begin{pmatrix} z_1 \\ z_2 \\ z_3 \\ \vdots \\ z_k \end{pmatrix} 
\]
where
\[
z_i = \sum_{j=i}^k(-1)^{j-i} \left(\frac{d}{dt}\right)^{j-i}\left(\frac{\partial L}{\partial q_j}\right).
\]

Computation shows that the Legendre transform will take solutions of \eqref{eq:jacobi} into solutions of a Hamiltonian system in the variables $(y,z)$ given by
\begin{equation}\label{eq:hamiltonian_system}
\begin{pmatrix} \dot{y}\\ \dot{z} \end{pmatrix} = 
\begin{pmatrix} A(t) & B(t)\\ C(t) & -A(t)^T \end{pmatrix}
\begin{pmatrix} y \\ z \end{pmatrix},
\end{equation}
where $C(t)$ is the same as above, $A(t)$ is given by
\[
A(t) = 
\begin{pmatrix}
0 & \text{Id} & 0 & \cdots & 0 & 0\\
0 & 0 & \text{Id} & \cdots & 0 & 0\\
\vdots & \vdots & \vdots & \ddots & \vdots & \vdots\\
0 & 0 & 0 & \cdots & \text{Id} & 0\\
0 & 0 & 0 & \cdots & 0 & \text{Id}\\
0 & 0 & 0 & \cdots & 0 & -M_{kk}^{-1} M_{(k-1)k}^T
\end{pmatrix},
\]
and $B(t)$ is given by
\[
B(t) = 
\begin{pmatrix}
0 & 0 & 0 & \cdots & 0 & 0\\
0 & 0 & 0 & \cdots & 0 & 0\\
\vdots & \vdots & \vdots & \vdots & \vdots & \vdots\\
0 & 0 & 0 & \cdots & 0 & 0 \\
0 & 0 & 0 & \cdots & 0 & 0 \\
0 & 0 & 0 & \cdots & 0 & (M_{kk})^{-1}
\end{pmatrix},
\]
where the $\text{Id}$ and $0$ blocks have size $n \times n$ in both matrices above.

Note that $B(t)^T = B(t)$ e $C(t)^T = C(t)$, and then the matrix with the $A$, $B$ and $C$  blocks in the expression \eqref{eq:hamiltonian_system} will be in the Lie algebra of 
the symplectic group (with respect to the canonical symplectic form on $\mathbb{R}^{2n}$ making the $(y,0)$ and $(0,z)$ Lagrangian subspaces). Thus, equation 
\eqref{eq:hamiltonian_system} is indeed a Hamiltonian system.

Let us define another transform on the $k$-jets of functions $h$, which we call the 
{\em zeroing transform} since it maps the functional $\calQ$ to a functional that 
apparently involves no derivatives.

Under the zeroing transform, the $k$-jet of $h$ is mapped to  $(\hat y,\hat z)$, where
\begin{equation}\label{eq:legendre_k_jet}
\hat y = \begin{pmatrix} h \\ \dot{h} \\ \vdots \\ h^{(k-2)} \\ h^{(k-1)} \end{pmatrix} \quad \text{and} \quad
\hat z = \begin{pmatrix} 0 \\ 0 \\ \vdots \\ 0 \\ M_{kk}h^{(k)} + M_{(k-1)k}^Th^{(k-1)} \end{pmatrix},
\end{equation}

There are two important properties of the zeroing transform: first, even tough the pair $(\hat y,\hat z)$ usually does not satisfies equation
\ref{eq:hamiltonian_system} the equation
$
\dot{\hat y} = A(t) \hat y + B(t) \hat z(t)
$
is an {\em identity} implied by the form of the matrices $A$ and $B$. This identity is needed in the generalized Picone identity \ref{generalizedPiconeTheorem}.

The zeroing transform also allows the writing of the functional $\calQ$ in a simpler way. Again take a function $h \in C^k([a,b], \bb{R}^n)$ and the pair $(\hat y,\hat z)$ defined by \eqref{eq:legendre_k_jet}. Then we will have
\begin{align*}
& {\hat z}^T B {\hat z} = \begin{pmatrix} {\hat z}_1^T & \cdots & {\hat z}_k^T \end{pmatrix}
\begin{pmatrix}
0 & 0 & 0 & \cdots & 0 & 0\\
0 & 0 & 0 & \cdots & 0 & 0\\
\vdots & \vdots & \vdots & \vdots & \vdots & \vdots\\
0 & 0 & 0 & \cdots & 0 & 0 \\
0 & 0 & 0 & \cdots & 0 & 0 \\
0 & 0 & 0 & \cdots & 0 & (M_{kk})^{-1}
\end{pmatrix}
\begin{pmatrix} {\hat z}_1 \\ \vdots \\ {\hat z}_k \end{pmatrix} = \\
& = {\hat z}_k^T M_{kk}^{-1} {\hat z}_k
= \left({h^{(k)}}^T M_{kk} + {h^{(k-1)}}^T M_{(k-1)k}\right) M_{kk}^{-1} \left(M_{kk} h^{(k)} + M_{(k-1)k}^T h^{(k-1)}\right) = \\
& = {h^{(k)}}^T M_{kk} h^{(k)} + 2{h^{(k-1)}}^T M_{(k-1)k} h^{(k)} + {h^{(k-1)}}^T M_{(k-1)k} M_{kk}^{-1} M_{(k-1)k}^T h^{(k-1)},
\end{align*}
and
\begin{align*}
{\hat y}^T C {\hat y} & =
{\hat y}^T \left(
\begin{pmatrix} \frac{\partial L}{\partial q_0} \\ \vdots \\ \frac{\partial L}{\partial q_{k-1}}\end{pmatrix} - 
\begin{pmatrix} 0 \\ \vdots \\ M_{(k-1)k}M_{kk}^{-1}\frac{\partial L}{\partial q_k} \end{pmatrix}\right) = \\
& = {\hat y}^T \begin{pmatrix} \frac{\partial L}{\partial q_0} \\ \vdots \\ \frac{\partial L}{\partial q_{k-1}}\end{pmatrix} - {\hat y}^T \begin{pmatrix} 0 \\ \vdots \\ M_{(k-1)k}M_{kk}^{-1}\frac{\partial L}{\partial q_k} \end{pmatrix} = \\
& = \sum_{i = 0}^{k-1} {h^{(i)}}^T M_{ii} h^{(i)} + 2 \sum_{i = 0}^{k-2}  {h^{(i)}}^T M_{i(i+1)} + {h^{(k-1)}}^T M_{(k-1)k} h^{(k)} + \\
& - {h^{(k-1)}}^T M_{(k-1)k} M_{kk}^{-1} \frac{\partial L}{\partial q_k} = \\
& =  \sum_{i = 0}^{k-1} {h^{(i)}}^T M_{ii} h^{(i)} + 2 \sum_{i = 0}^{k-2}  {h^{(i)}}^T M_{i(i+1)} h^{(i+1)} + {h^{(k-1)}}^T M_{(k-1)k} h^{(k)} \\
& - \left( {h^{(k-1)}}^T M_{(k-1)k} h^{(k)} + {h^{(k-1)}}^T M_{(k-1)k} M_{kk}^{-1} M_{(k-1)k}^T h^{(k-1)} \right) = \\
& = \sum_{i = 0}^{k-1} {h^{(i)}}^T M_{ii} h^{(i)} + 2 \sum_{i = 0}^{k-2}  {h^{(i)}}^T M_{i(i+1)} h^{(i+1)} - {h^{(k-1)}}^T M_{(k-1)k} M_{kk}^{-1} M_{(k-1)k}^T h^{(k-1)}.
\end{align*}

Adding ${\hat z}^T B {\hat z}$ and ${\hat y}^T C {\hat y}$ leads to
\[
{\hat z}^T B {\hat z} + {\hat y}^T C {\hat y} = \sum_{i = 0}^{k} {h^{(i)}}^T M_{ii} h^{(i)} + 2 \sum_{i = 0}^{k-1}  {h^{(i)}}^T M_{i(i+1)} h^{(i+1)} = 2 L.
\]

Then, the Legendre transform  takes solutions of \eqref{eq:jacobi} to solutions of \eqref{eq:hamiltonian_system}, and the zeroing transform  will give the equality of the quadratic functionals (up to a factor 2):
\[
\tilde{\calQ} = \int_a^b {\hat z}^T B {\hat z} + {\hat y}^T C {\hat y} \, dt = 2 \calQ = 2 \int_a^b L(t, \dot h(t), \ddot h(t), \dots h^{(k)}(t)) \, dt.
\]

Let $h_1, \ldots, h_{kn}$ be a linearly independent set of solutions of \eqref{eq:jacobi} such that all the derivatives up to the $k-1$ order vanish at $t = a$. Consider the following sub-Wronskian
\[
W[h_1, \ldots, h_{kn}](t) = \det
\begin{pmatrix}
h_1^T(t) & \dot{h_1}^T(t) & \cdots & {h_1^{(k-1)}}^T(t) \\
h_2^T(t) & \dot{h_2}^T(t) & \cdots & {h_2^{(k-1)}}^T(t) \\
\vdots & \vdots & \vdots & \vdots \\
h_{kn}^T(t) & \dot{h_{kn}}^T(t) & \cdots & {h_{kn}^{(k-1)}}^T(t)
\end{pmatrix}
\]

\begin{defn}
	A point $t^\ast \in (a,b]$ is said to be {\em conjugate} to $a$ if 
	$W[h_1, \ldots, h_{kn}](t^\ast) = 0$.
\end{defn}

Then the following is immediate:
\begin{lem}
A point $t^\ast$ is conjugate to $a$ if, and only if, exists a non-trivial solution $h$ of \eqref{eq:jacobi} such that $h^{(i)}(a) = h^{(i)}(t^\ast) = 0$, for $i = 0, 1, \ldots, k-1$.
\end{lem}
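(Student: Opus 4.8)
The plan is to reduce the statement to elementary linear algebra applied to the Wronskian matrix at $t^\ast$, the only genuinely analytic ingredient being a dimension count for the space of solutions of \eqref{eq:jacobi} that vanish to order $k-1$ at $a$. Write $\mathcal{W}(t)$ for the $kn\times kn$ matrix whose determinant is $W[h_1,\ldots,h_{kn}](t)$, so that the $j$-th row of $\mathcal{W}(t)$ is the row vector $(h_j^T(t),\dot h_j^T(t),\ldots,{h_j^{(k-1)}}^T(t))$. By definition $t^\ast$ is conjugate to $a$ exactly when $\det\mathcal{W}(t^\ast)=0$, which holds if and only if $\mathcal{W}(t^\ast)$ is singular, i.e. if and only if there is a nonzero row vector $c=(c_1,\ldots,c_{kn})$ with $c\,\mathcal{W}(t^\ast)=0$.

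First I would unwind what such a left null vector means. Reading $c\,\mathcal{W}(t^\ast)=0$ block by block in the $k$ groups of $n$ columns, it says $\sum_{j=1}^{kn} c_j\, h_j^{(i)}(t^\ast)=0$ for every $i=0,1,\ldots,k-1$. Setting $h:=\sum_{j=1}^{kn} c_j h_j$, which is again a solution of \eqref{eq:jacobi} by linearity, this is precisely $h^{(i)}(t^\ast)=0$ for $i=0,\ldots,k-1$. Moreover $h$ inherits from the $h_j$ the vanishing $h^{(i)}(a)=0$ for $i=0,\ldots,k-1$, and $h$ is non-trivial since the $h_j$ are linearly independent and $c\neq 0$. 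This already proves the implication from conjugacy to the existence of the desired solution.

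For the converse I would start with a non-trivial solution $h$ satisfying $h^{(i)}(a)=h^{(i)}(t^\ast)=0$ for $i=0,\ldots,k-1$ and express it in terms of the given frame. The key point is that $h_1,\ldots,h_{kn}$ form a \emph{basis} of the space of all solutions of \eqref{eq:jacobi} that vanish to order $k-1$ at $a$: under the strict Legendre condition $M_{kk}$ is invertible, so \eqref{eq:jacobi} may be solved for $h^{(2k)}$ and is a bona fide linear system of order $2k$ for the $\bbR^n$-valued unknown $h$, whence its solution space has dimension $2kn$; the $kn$ linear conditions $h^{(i)}(a)=0$, $i=0,\ldots,k-1$, cut this down to a $kn$-dimensional subspace, of which the $kn$ linearly independent $h_j$ form a basis. (Equivalently, one passes through the Legendre transform to the first-order system \eqref{eq:hamiltonian_system} on $\bbR^{2kn}$ and reads the condition as $y(a)=0$.) Hence $h=\sum_j c_j h_j$ with $c\neq 0$, and the conditions $h^{(i)}(t^\ast)=0$ translate back into $c\,\mathcal{W}(t^\ast)=0$, forcing $\det\mathcal{W}(t^\ast)=0$.

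The only step requiring care, and the one I expect to be the main obstacle, is this dimension count that makes $h_1,\ldots,h_{kn}$ a basis of the solutions vanishing to order $k-1$ at $a$; everything else is the bijective dictionary between left null vectors of $\mathcal{W}(t^\ast)$ and linear combinations of the frame vanishing to order $k-1$ at $t^\ast$. The count hinges entirely on the strict Legendre condition ensuring that \eqref{eq:jacobi} is non-degenerate of order $2k$, so that the data of orders $0$ through $k-1$ at $a$ are freely prescribable and the resulting $kn$-parameter family is exactly $\mathrm{span}\{h_1,\ldots,h_{kn}\}$.
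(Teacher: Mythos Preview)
Your proposal is correct and is precisely the straightforward linear-algebra argument that the paper has in mind when it declares the lemma ``immediate'' without proof. The only substantive point---that the $h_1,\ldots,h_{kn}$ span the full space of solutions vanishing to order $k-1$ at $a$, via the strict Legendre condition making \eqref{eq:jacobi} genuinely of order $2k$---is exactly the ingredient the paper is tacitly invoking.
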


In the same way as 1 dimensional case we will have

\begin{thm}\label{thm:positivity_any_dimension}
	If there are no conjugate points to $a$ on $(a,b]$, then the functional $\calQ$ is 
	positive definite, that is, $\calQ(h) \geq 0$ and $\calQ(h)=0$ only when $h\equiv 0$. 
\end{thm}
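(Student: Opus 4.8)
The plan is to imitate, in the Hamiltonian variables, the route by which the scalar Theorem \ref{positivity-dim1} was deduced from Eswaran's identity \eqref{eq:identidade_picone1}. The zeroing transform has already given us the exact equality $\tilde{\calQ} = 2\calQ$ with $\tilde{\calQ}(h) = \int_a^b \hat z^T B \hat z + \hat y^T C \hat y\, dt$, so it suffices to show that this integrand is a positive semidefinite quantity modulo a total derivative whose boundary contribution vanishes. In other words, the positivity statement should follow from a \emph{generalized (matrix) Picone identity} \ref{generalizedPiconeTheorem} exactly as, in dimension one, positivity followed at once from Theorem \ref{eswaran-identities}.

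To set up that identity I would assemble the chosen solutions $h_1,\dots,h_{kn}$ of \eqref{eq:jacobi} into a $kn\times kn$ matrix solution $Y(t)$ of the Hamiltonian system \eqref{eq:hamiltonian_system}, together with its momentum block $Z(t)$, so that $(Y,Z)$ spans a Lagrangian (conjoined) solution. The sub-Wronskian $W[h_1,\dots,h_{kn}]$ equals $\det Y(t)$, so the disconjugacy hypothesis says precisely that $Y(t)$ is invertible for every $t\in(a,b]$. On that interval one forms the Riccati matrix $\mathcal W = Z Y^{-1}$; using that $(Y,Z)$ is Lagrangian (isotropy gives $Y^T Z = Z^T Y$) one checks that $\mathcal W$ is symmetric and, from $\dot Y = AY+BZ$, $\dot Z = CY - A^T Z$, that it solves $\dot{\mathcal W} = C - A^T\mathcal W - \mathcal W A - \mathcal W B \mathcal W$. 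A direct completion-of-the-square then yields the identity
\[
\hat z^T B \hat z + \hat y^T C \hat y = (\hat z - \mathcal W \hat y)^T B (\hat z - \mathcal W \hat y) + \frac{d}{dt}\!\left(\hat y^T \mathcal W \hat y\right),
\]
which is the content of \ref{generalizedPiconeTheorem}, with $R = \hat y^T\mathcal W\hat y$.

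With this in hand the proof closes quickly. Integrating over $[a,b]$, the total-derivative term contributes $R(b)-R(a)$; since $\hat y$ collects $h$ and its derivatives up to order $k-1$, and $h$ vanishes together with those derivatives at both endpoints, $R$ vanishes at $a$ and $b$ (the matrix analogue of the endpoint clause on $R$ in Theorem \ref{eswaran-identities}), so
\[
2\calQ(h) = \tilde{\calQ}(h) = \int_a^b (\hat z - \mathcal W \hat y)^T B (\hat z - \mathcal W \hat y)\, dt \ge 0,
\]
because the only nonzero block of $B$ is $M_{kk}^{-1}$, which is positive definite by the strong Legendre condition. For definiteness, if $\calQ(h)=0$ then $h$ minimizes the now nonnegative functional $\calQ$ among functions vanishing to order $k-1$ at the endpoints, hence is a critical point and so solves its Euler--Lagrange equation, which is exactly the Jacobi equation \eqref{eq:jacobi}; since $h$ also satisfies $h^{(i)}(a)=h^{(i)}(b)=0$ for $0\le i\le k-1$, the lemma preceding this theorem together with the disconjugacy hypothesis forces $h\equiv 0$. (Alternatively, vanishing of the integrand gives $\hat z = \mathcal W\hat y$ through the only active block, whence the zeroing-transform identity $\dot{\hat y}=A\hat y+B\hat z$ becomes $\dot{\hat y}=(A+B\mathcal W)\hat y$ with $\hat y(a)=0$, and uniqueness gives $\hat y\equiv 0$.)

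The main obstacle is genuinely the matrix identity \ref{generalizedPiconeTheorem} itself, and it is new relative to the scalar theory in two respects. First, establishing it requires tracking the correct \emph{order} of the non-commuting blocks $M_{ij}$, $\mathcal W$ and $Y^{-1}$: the symmetry of $\mathcal W$ and the exactness of the remainder $dR/dt$ must be verified rather than transcribed from \cite{eastham,eswaran}, and this is exactly the careful extension of \cite{coppel} announced at the end of Section \ref{sec-eswaran}. Second, the Riccati matrix $\mathcal W = ZY^{-1}$ is singular at $t=a$, where $Y(a)=0$, so the identity is valid only on the half-open interval, and one must justify that $R(t)\to 0$ as $t\to a^+$ so that the integral is unaffected by the endpoint — precisely the delicate point handled in the scalar case by the clause ``$R(t^\ast)=0$ if $h^{(i)}(t^\ast)=0$'' in Theorem \ref{eswaran-identities}.
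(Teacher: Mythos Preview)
Your proposal is correct and follows essentially the same route as the paper: assemble the Legendre-transformed solutions into $(Y,Z)$, establish the generalized Picone identity (Theorem \ref{generalizedPiconeTheorem}) via the symmetry of $ZY^{-1}$, integrate against the zeroing transform $(\hat y,\hat z)$ so that the boundary term drops out and only the $B$-square survives, and conclude definiteness from the first-order ODE $\dot{\hat y}=(A+BZY^{-1})\hat y$ with $\hat y(a)=0$. You are in fact more explicit than the paper about the endpoint singularity of $ZY^{-1}$ at $t=a$, which the paper's argument passes over silently.
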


The proof of this theorem will involve a generalized Picone identity of the Hamiltonian system \eqref{eq:hamiltonian_system}. Let us fix a set $\{h_1, \ldots, h_{kn}\}$ of linearly independent solutions of \eqref{eq:jacobi} such that all the derivatives up to the $k-1$ order vanish at $t = a$, and  consider the image of the Legendre transform of the $(2k-1)$-jet of each $h_j$,
\[
\text{Leg}\left(h_j, \dot{h_j}, \ldots, h_j^{(2k-1)}\right) = \begin{pmatrix} \mu_j \\ \zeta_j \end{pmatrix},
\]
we now put then together by constructing the $2kn \times kn $ matrix where the columns are the image of the Legendre transform for each $h_j$:
\[
\begin{pmatrix} Y(t) \\ Z(t) \end{pmatrix} = 
\begin{pmatrix}
\mu_1 & \cdots & \mu_j \\
\zeta_1 & \cdots & \zeta_j 
\end{pmatrix},
\]
which defines
\[
Y(t) = \begin{pmatrix} \mu_1 & \cdots & \mu_j \end{pmatrix} \quad \text{and} \quad
Z(t) = \begin{pmatrix} \zeta_1 & \cdots & \zeta_j \end{pmatrix}.
\]
We have the following lemma, where the proof follows by the fact that each image of the Legendre transform of $h_j$ satisfy \ref{eq:hamiltonian_system} and using the initial condition at $t = a$:
\begin{lem} \label{hipoPicone}
In the conditions above we have
\[
Y(t)^T Z(t) - Z(t)^T Y(t) = 0, \, \forall t.
\]
\end{lem}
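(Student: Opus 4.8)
The plan is to recognize this lemma as the standard fact that the Hamiltonian flow of \eqref{eq:hamiltonian_system} carries a Lagrangian subspace to a Lagrangian subspace, together with the observation that the initial data span the vertical (hence Lagrangian) subspace at $t=a$. Concretely, the quantity $W(t) = Y(t)^T Z(t) - Z(t)^T Y(t)$ is the matrix of values of the canonical symplectic form on the frame obtained by Legendre-transforming the $h_j$, and I would show it is constant in $t$ and vanishes at the left endpoint.

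First I would assemble a matrix ODE. Since each column $\begin{pmatrix} \mu_j \\ \zeta_j \end{pmatrix}$ of $\begin{pmatrix} Y \\ Z \end{pmatrix}$ is the Legendre image of a solution of \eqref{eq:jacobi} and therefore solves \eqref{eq:hamiltonian_system}, stacking the columns gives the block system $\dot Y = A Y + B Z$ and $\dot Z = C Y - A^T Z$, with the same $A$, $B$, $C$ as in \eqref{eq:hamiltonian_system}.

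Next I would set $W(t) = Y^T Z - Z^T Y$ and compute $\dot W$, substituting the block system. Differentiating gives $\dot W = \dot Y^T Z + Y^T \dot Z - \dot Z^T Y - Z^T \dot Y$, and after inserting the expressions for $\dot Y$ and $\dot Z$ the cross terms $Y^T A^T Z$, $Z^T A Y$, $Z^T B Z$ and $Y^T C Y$ each occur with opposite signs and cancel, provided one uses the symmetry $B^T = B$ and $C^T = C$ recorded just before \eqref{eq:hamiltonian_system}. This yields $\dot W \equiv 0$, so $W$ is constant along the interval. Finally I would evaluate at $t=a$: by hypothesis every $h_j$ vanishes together with its derivatives up to order $k-1$ at $a$, so each $\mu_j(a) = \bigl(h_j(a), \dots, h_j^{(k-1)}(a)\bigr)^T = 0$, whence $Y(a) = 0$ and $W(a) = 0$. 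Combining constancy with $W(a)=0$ gives $W(t)=0$ for all $t$, which is the assertion.

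The argument is essentially routine; the only point requiring care is the transpose bookkeeping in $\dot W$ and the correct invocation of the symmetry of $B$ and $C$, which is precisely what encodes the Hamiltonian (symplectic) structure of \eqref{eq:hamiltonian_system}. I do not expect any genuine obstacle beyond keeping these signs and transposes straight.
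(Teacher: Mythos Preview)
Your argument is correct and follows exactly the approach the paper indicates: use that each column of $\begin{pmatrix} Y \\ Z \end{pmatrix}$ satisfies the Hamiltonian system \eqref{eq:hamiltonian_system} to show $W=Y^TZ-Z^TY$ is constant (via the symmetry of $B$ and $C$), and then use the initial condition $Y(a)=0$ coming from $h_j^{(i)}(a)=0$ for $0\le i\le k-1$.
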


\begin{cor}
	In the conditions above, and supposing that $Y(t)$ is invertible for all $t$, we have that $Z(t)Y(t)^{-1}$ is symmetric for all $t$.
\end{cor}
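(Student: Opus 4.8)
The plan is to derive this as a purely algebraic consequence of Lemma \ref{hipoPicone}. That lemma gives the symmetry relation $Y(t)^T Z(t) = Z(t)^T Y(t)$ for all $t$, and the corollary adds the hypothesis that $Y(t)$ is invertible. Since we are now free to multiply by $Y(t)^{-1}$, the whole argument reduces to transposing the candidate matrix $Z(t) Y(t)^{-1}$ and using the relation to identify it with itself. I would suppress the variable $t$ throughout, as the identity holds pointwise.

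First I would record that invertibility of $Y$ forces invertibility of $Y^T$, with $(Y^T)^{-1} = (Y^{-1})^T$; this is the only structural fact needed beyond the lemma. Next I would compute the transpose of the target matrix directly:
\[
\left(Z Y^{-1}\right)^T = (Y^{-1})^T Z^T = (Y^T)^{-1} Z^T .
\]
Then I would insert the lemma in the form $Z^T Y = Y^T Z$, i.e. $Z^T = Y^T Z Y^{-1}$ after right-multiplication by $Y^{-1}$, and substitute to obtain
\[
(Y^T)^{-1} Z^T = (Y^T)^{-1} Y^T Z Y^{-1} = Z Y^{-1}.
\]
Comparing the two displays yields $\left(Z Y^{-1}\right)^T = Z Y^{-1}$, which is exactly the asserted symmetry.

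Equivalently, and perhaps more transparently, I could conjugate the lemma's relation on both sides: left-multiplying $Y^T Z = Z^T Y$ by $(Y^T)^{-1}$ and right-multiplying by $Y^{-1}$ gives $Z Y^{-1} = (Y^T)^{-1} Z^T = (Z Y^{-1})^T$ in one step. Either ordering works; the key is that every manipulation uses only the invertibility of $Y$ and the established identity.

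I do not expect any genuine obstacle here, since the statement is a standard linear-algebra corollary of a skew-type vanishing relation (it is the familiar fact that a Lagrangian frame with invertible ``position'' block $Y$ yields a symmetric Riccati matrix $Z Y^{-1}$). The only point requiring a word of care is justifying $(Y^T)^{-1} = (Y^{-1})^T$ and confirming that the order of multiplication is respected in the noncommutative (matrix-valued) setting; once that bookkeeping is fixed, the conclusion is immediate and holds for every $t$ in the interval.
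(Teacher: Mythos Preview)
Your proof is correct and is exactly the standard derivation the paper has in mind: the corollary is stated without proof precisely because it follows by conjugating the identity $Y^T Z = Z^T Y$ of Lemma~\ref{hipoPicone} by $Y^{-1}$ on the right and $(Y^T)^{-1}$ on the left. There is nothing to add.
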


We can now state the generalized Picone identity:

\begin{thm}[Generalized Picone Identity] \label{generalizedPiconeTheorem}
Let $(Y;Z)$ satisfy \ref{hipoPicone} above and suppose that $Y(t)$ is invertible for all $t \in (a;b]$. Also consider the pair $(y; z)$ satisfying the relation $\dot{y} = A(t) y + B(t) z$. In these conditions we have
\begin{equation} \label{eq:identidade_picone2}
\frac{d}{dt}(y^T ZY^{-1}y) =z^T B z + y^T C y  - (z-ZY^{-1}y)^TB(z-ZY^{-1}y).
\end{equation}
\end{thm}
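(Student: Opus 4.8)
The plan is to prove the identity by a direct computation of $\frac{d}{dt}(y^T Z Y^{-1} y)$, matching it term by term against the right-hand side. Write $S = ZY^{-1}$; by Lemma \ref{hipoPicone} and its corollary, $S$ is symmetric. The essential structural input is that the columns of $\begin{pmatrix} Y \\ Z \end{pmatrix}$ are Legendre transforms of solutions of the Jacobi equation, hence satisfy the \emph{full} Hamiltonian system \eqref{eq:hamiltonian_system}, giving $\dot Y = AY + BZ$ and $\dot Z = CY - A^T Z$. By contrast, the pair $(y,z)$ is assumed only to obey the single relation $\dot y = Ay + Bz$; this asymmetry is exactly what will cause the completion-of-squares defect term to survive.

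First I would derive a Riccati-type equation for $S$. Using $\frac{d}{dt}(Y^{-1}) = -Y^{-1}\dot Y Y^{-1}$ and substituting $\dot Z Y^{-1} = (CY - A^T Z)Y^{-1} = C - A^T S$ together with $S\dot Y Y^{-1} = S(AY+BZ)Y^{-1} = SA + SBS$, one obtains
\[
\dot S = \dot Z Y^{-1} - Z Y^{-1}\dot Y Y^{-1} = C - A^T S - SA - SBS .
\]

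Next I would differentiate the scalar $y^T S y$ by the product rule, $\frac{d}{dt}(y^T S y) = \dot y^T S y + y^T \dot S y + y^T S \dot y$, substitute $\dot y = Ay + Bz$ and the Riccati equation above, and use $B = B^T$ (so that $z^T B^T S y = z^T B S y$) and $S = S^T$. The two terms $\pm\, y^T A^T S y$ cancel, as do the two terms $\pm\, y^T SA y$, leaving
\[
\frac{d}{dt}(y^T S y) = y^T C y + z^T B S y + y^T S B z - y^T S B S y .
\]
Finally I would expand the claimed right-hand side. Since $S^T = S$, one has $(z - Sy)^T B (z - Sy) = z^T B z - z^T B S y - y^T S B z + y^T S B S y$, so that
\[
z^T B z + y^T C y - (z - Sy)^T B (z - Sy) = y^T C y + z^T B S y + y^T S B z - y^T S B S y ,
\]
which coincides with the previous display, proving the identity.

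I expect no conceptual obstacle here: the entire content is the interplay between the full Hamiltonian dynamics satisfied by $(Y,Z)$ and the partial relation $\dot y = Ay + Bz$ satisfied by $(y,z)$, which is precisely why the defect $(z - ZY^{-1}y)^T B (z - ZY^{-1}y)$ does not vanish. The only genuinely delicate point is bookkeeping the symmetries; one must invoke the corollary to Lemma \ref{hipoPicone} to guarantee that $S = ZY^{-1}$ is symmetric, and use $B = B^T$ in the cancellations. Beyond this the verification is a routine, if somewhat lengthy, matrix manipulation, and note that the symmetry of $C$ is not even required since the term $y^T C y$ passes through both sides unchanged.
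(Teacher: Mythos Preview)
Your proof is correct and follows essentially the same approach as the paper: a direct differentiation using $\dot Y = AY+BZ$, $\dot Z = CY-A^TZ$, $\dot y = Ay+Bz$, the symmetry of $B$, and the symmetry of $ZY^{-1}$. The only difference is organizational: you first isolate the Riccati equation $\dot S = C - A^TS - SA - SBS$ for $S=ZY^{-1}$ and then differentiate $y^TSy$, whereas the paper expands $\frac{d}{dt}(y^TZY^{-1}y)$ in one pass without naming the Riccati step; the algebra and cancellations are the same.
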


\begin{proof}
	Calculating the derivative above, using $\dot{y} = A(t) y + B(t)z$ and
	\[
	\dot{Y} = AY + BZ, \quad \dot{Z} = CY - A^T Z,
	\]
	we  have
	\begin{align*}
	& \frac{d}{dt} (y^T ZY^{-1}y) = {\dot{y}}^T ZY^{-1}y + y^T \dot{Z}Y^{-1}y + y^T Z(-Y^{-1}\dot{Y}Y^{-1})y + y^T ZY^{-1}\dot{y} = \\
	& = (Ay + Bz)^T ZY^{-1}y + y^T (CY - A^TZ)Y^{-1}y - y^T Z(Y^{-1}(AY+BZ)Y^{-1})y + \\ 
	& + y^T ZY^{-1}(Ay+Bz) = \\
	& = (\cancel{y^T A} + z^TB) ZY^{-1}y + y^T (C - \cancel{A^TZY^{-1}})y - y^T Z(\cancel{Y^{-1}A} + Y^{-1}BZY^{-1})y + \\ 
	& + y^T ZY^{-1}(\cancel{Ay}+Bz) = \\
	& = z^TB ZY^{-1}y + y^T C y - y^TZ Y^{-1}BZY^{-1}y + y^T ZY^{-1} Bz + z^T B z - z^T B z = \\
	& = z^T B +  y^T C y + \underbrace{(z^TB ZY^{-1}y - y^TZ Y^{-1}BZY^{-1}y + y^T ZY^{-1} Bz - z^T B z)}_\Delta.
	\end{align*}
	
	Expanding the last term in \eqref{eq:identidade_picone2} and using that $ZY^{-1}$ is symmetric we have
	\begin{align*}
	& - (z-ZY^{-1}y)^TB(z-ZY^{-1}y) = (y^T(ZY^{-1})^T-z^T)B(z-ZY^{-1}y) = \\
	& = (y^TZY^{-1}-z^T)B(z-ZY^{-1}y) = y^TZY^{-1}Bz - y^TZY^{-1} B ZY^{-1}y - z^T B z + \\ 
	& + z^T B ZY^{-1}y = \Delta,
	\end{align*}
	and then, the identity follows.
\end{proof}

With the hypothesis above, let $h \in C^k([a,b],\bbR^n)$  satisfying $h^{(i)}(a) = h^{(i)}(b) = 0$, $0 \leq i \leq k-1$, and consider the image $(\hat y, \hat z)$ of the zeroing transform of the $k$-jet of $h$ (as in \eqref{eq:legendre_k_jet}). We have
\begin{align*}
2 \calQ[h] = \tilde{\calQ}[\hat y, \hat z] & = \int_a^b {\hat z}^T B {\hat z} + {\hat y}^T C {\hat y} \, dt = \\ 
& = \int_a^b \frac{d}{dt}({\hat y}^T ZY^{-1}{\hat y}) + ({\hat z}-ZY^{-1}{\hat y})^TB({\hat z}-ZY^{-1}{\hat y}) \, dt = \\
& = \int_a^b ({\hat z}-ZY^{-1}{\hat y})^TB({\hat z}-ZY^{-1}{\hat y}) \, dt \geq 0
\end{align*}
and
\begin{align*}
2 \calQ & = \tilde{\calQ} = 0 \Leftrightarrow  \int_a^b ({\hat z}-ZY^{-1}{\hat y})^TB({\hat z}-ZY^{-1}{\hat y}) \, dt = 0 \Leftrightarrow \\
& \Leftrightarrow B({\hat z}-ZY^{-1}{\hat y}) = 0 \Leftrightarrow \dot{{\hat y}} - A{\hat y} - BZY^{-1}{\hat y} = 0 \Leftrightarrow \\
& \Leftrightarrow \dot{{\hat y}} = (A + BZY^{-1}){\hat y} \Leftrightarrow {\hat y} \equiv 0. 
\end{align*}

Then theorem \ref{thm:positivity_any_dimension} follows, since $\hat y\equiv 0$ implies 
that $h\equiv0$. 

\subsection{Jacobi curves}

Let us projectivize as in the 1-dimensional case. 
 Define the  $2kn \times n$ frame
\begin{equation}\label{eq:frame_of_solutions}
\cal{A}(t) = 
\begin{pmatrix} h_1^T \\ \vdots \\ h_{kn}^T \\ h_{kn+1}^T \\ \vdots \\ h_{2kn}^T \end{pmatrix},
\end{equation}
where $\{h_1, \ldots, h_{2kn}\}$ is a fundamental set of solutions of \eqref{eq:jacobi} such that for $i = 1, \ldots, kn,$ each $h_i$ has all derivatives vanishing up to order $k-1$ at $t=a$.

Again considering the space $p(t)$ spanned by the columns of $\cal{A}$ at each $t$ we will have 

\begin{thm}\label{thm:is_generalized_fanning}
The curve $p(t)$ is a fanning curve in the Grassmannian $\text{Gr}(n, 2kn)$
\end{thm}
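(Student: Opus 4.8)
The plan is to reduce the fanning condition to the non-vanishing of a Wronskian, exactly as in the one-dimensional Proposition \ref{is_fanning}. Since the ambient space has dimension $2kn$ while the subspaces have dimension $n$, the relevant divisibility parameter of $\text{Gr}(n,2kn)$ is $2kn/n = 2k$; so, by the criterion for fanning curves in a divisible Grassmannian recalled in Section \ref{previous}, I must show that the juxtaposed jet matrix
\[
\bigl(\cal{A}(t) \mid \cal{A}'(t) \mid \cdots \mid \cal{A}^{(2k-1)}(t)\bigr)
\]
is invertible for every $t$. This is a $2kn \times 2kn$ matrix whose entry in row $i$, in the column labelled by derivative order $j \in \{0,\ldots,2k-1\}$ and coordinate $\alpha \in \{1,\ldots,n\}$, is the $\alpha$-th component of $h_i^{(j)}$. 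Reading this off, its determinant is precisely the full Wronskian of the fundamental set $\{h_1,\ldots,h_{2kn}\}$ of solutions of the Jacobi equation \eqref{eq:jacobi}, the columns being grouped by successive derivatives. (In particular, invertibility of this matrix forces its first $n$ columns, namely $\cal{A}(t)$ itself, to be linearly independent, so $p(t)$ is a genuine curve in $\text{Gr}(n,2kn)$.)

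It then remains to argue that this Wronskian never vanishes. The key point is that \eqref{eq:jacobi} is a \emph{genuine} linear system of order $2k$: its highest-order term is $(-1)^k \frac{d^k}{dt^k}\bigl(\partial L/\partial q_k\bigr)$, and since $\partial L/\partial q_k = M_{kk} h^{(k)} + M_{(k-1)k}^T h^{(k-1)}$, the leading coefficient is $(-1)^k M_{kk}$ applied to $h^{(2k)}$, the remaining terms being of lower order. The strong Legendre condition makes $M_{kk}(t)$ positive definite, hence invertible, so the equation solves for $h^{(2k)}$ and is a regular $2k$-th order system in $n$ unknowns. Its solution space therefore has dimension $2kn$, confirming that $\{h_1,\ldots,h_{2kn}\}$ is a fundamental set. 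Passing to the first-order reduction with state $(h,\dot h,\ldots,h^{(2k-1)})$, the jet matrix above is, up to transposition and reordering of rows, the associated fundamental matrix; by Liouville's formula its determinant is either identically zero or nowhere zero, and linear independence of the solutions excludes the former. This yields invertibility at every $t$, as required.

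The only content beyond the one-dimensional argument is the index bookkeeping: verifying that the divisibility parameter is $2k$ (so that exactly the $(2k-1)$-jet of $\cal{A}$ enters the fanning test) and that the coordinate-wise derivatives assemble into the vector Wronskian. I expect no genuine obstacle here; once the leading coefficient $M_{kk}$ is seen to be invertible, the non-vanishing of the Wronskian is the standard existence-uniqueness fact for linear systems, and the proof closely mirrors Proposition \ref{is_fanning}.
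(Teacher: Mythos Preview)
Your proof is correct and follows essentially the same approach as the paper: both reduce the fanning condition to the invertibility of the $(2k-1)$-jet matrix $\bigl(\cal{A}(t)\mid \cal{A}'(t)\mid\cdots\mid\cal{A}^{(2k-1)}(t)\bigr)$ and identify its determinant with the Wronskian of the fundamental set of solutions of the Jacobi equation, which is nowhere zero. You supply a bit more justification (invertibility of $M_{kk}$ making the system genuinely of order $2k$, Liouville's formula), but the argument is the same.
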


\begin{proof}
	Considering the prolongation to $(2k-1)$-jet of $p$ written in terms of the frame $\cal{A}$, the rank of this prolongation is maximal, i.e., for each $t$ the $2kn \times 2kn$ matrix bellow is non-degenerate:
	\[
		\left(\cal{A}(t) \Big| \dot{\cal{A}}(t) \Big| \cdots \Big| \cal{A}^{(2k-1)}(t)\right).
	\]
	The non-degeneracy comes from the fact that the determinant of the matrix above is the Wronskian of a set of fundamental solutions of \eqref{eq:jacobi} (which is non-zero for all $t$), and then the assertion follows.
	
\end{proof}

\begin{defn}
	The {\em Jacobi curve} $\ell:[a,b] \rightarrow \text{Gr}(kn, 2kn)$ is the $(k-1)$-jet prolongation of $p$, that is, the curve of subspaces spanned by the columns of the matrix
	\begin{equation}\label{eq:jacobi-frame}
		\left(\cal{A}(t) \Big| \dot{\cal{A}}(t) \Big| \cdots \Big| \cal{A}^{(k-1)}(t)\right).
	\end{equation}
\end{defn}

Now if we define the {\em vertical space} $\mathcal V_{kn}^{2kn} \subset \bbR^{2k}$ as the vectors having vanishing first $kn$-coordinates, we have that $\mathcal V_{kn}^{2kn} = \ell(a)$ and 
theorem \ref{thm:positivity_any_dimension} translates to 

\begin{thm}
	If $\ell(t^\ast) \cap \mathcal V_{kn}^{2kn} = \{0\}$ for all $t^\ast \in (a,b]$, then the functional $\calQ(t)$ is positive definite. 
\end{thm}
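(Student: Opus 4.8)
The plan is to reduce the geometric statement to the Wronskian disconjugacy criterion and then invoke Theorem \ref{thm:positivity_any_dimension}. Concretely, I will prove the dictionary: for $t^\ast \in (a,b]$, one has $\ell(t^\ast)\cap \mathcal V_{kn}^{2kn} \neq \{0\}$ if and only if $t^\ast$ is conjugate to $a$, i.e. $W[h_1,\ldots,h_{kn}](t^\ast)=0$. Once this equivalence is in place, the hypothesis of the theorem says exactly that there are no conjugate points on $(a,b]$, and Theorem \ref{thm:positivity_any_dimension} delivers positive definiteness with no further work.

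First I would write the Jacobi frame $\mathcal{M}(t) = \bigl(\mathcal{A}(t)\,|\,\dot{\mathcal{A}}(t)\,|\,\cdots\,|\,\mathcal{A}^{(k-1)}(t)\bigr)$ in block form adapted to the splitting of $\bbR^{2kn}$ that defines $\mathcal V_{kn}^{2kn}$. Since the $i$-th row of $\mathcal{A}^{(j)}$ is $(h_i^{(j)})^\top$, the top $kn$ rows of $\mathcal{M}(t)$ — those indexed by the solutions $h_1,\ldots,h_{kn}$ vanishing to order $k-1$ at $a$ — are exactly the entries of the $kn\times kn$ sub-Wronskian matrix, call it $W(t)$, whose determinant is $W[h_1,\ldots,h_{kn}](t)$. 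Thus $\mathcal{M}(t)=\begin{pmatrix} W(t)\\ U(t)\end{pmatrix}$ for a lower block $U(t)$, and the projection of $\bbR^{2kn}$ onto its first $kn$ coordinates sends a column $\mathcal{M}(t)c$ to $W(t)c$. I would then record that $\mathcal{M}(t)$ has full column rank $kn$ for every $t$: this is precisely the assertion that $\ell(t)$ is $kn$-dimensional, which follows from Theorem \ref{thm:is_generalized_fanning}, because for the fanning curve $p$ the canonical flag jumps in dimension by $n$ at each prolongation, so its $(k-1)$-jet prolongation has dimension $n+(k-1)n=kn$. In particular $c\mapsto \mathcal{M}(t)c$ is injective.

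With these two facts the equivalence is immediate. An element of $\ell(t^\ast)$ is $\mathcal{M}(t^\ast)c$ for some $c\in\bbR^{kn}$, and it lies in $\mathcal V_{kn}^{2kn}$ precisely when its first $kn$ coordinates vanish, i.e. when $W(t^\ast)c=0$; by injectivity of $\mathcal{M}(t^\ast)$ such an element is nonzero exactly when $c\neq 0$. Hence $\ell(t^\ast)\cap \mathcal V_{kn}^{2kn}\neq\{0\}$ if and only if $W(t^\ast)$ is singular, i.e. $W[h_1,\ldots,h_{kn}](t^\ast)=0$, which is the definition of $t^\ast$ being conjugate to $a$. The theorem then follows from Theorem \ref{thm:positivity_any_dimension}. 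I do not expect a genuine obstacle here, as the argument is purely a translation between the projective picture and the Wronskian criterion; the only point needing care is the use of fanning, since without injectivity of $\mathcal{M}(t^\ast)$ a nonzero $c$ in $\ker W(t^\ast)$ could yield $\mathcal{M}(t^\ast)c=0$ and thus a spurious trivial intersection — invoking Theorem \ref{thm:is_generalized_fanning} is exactly what rules this out.
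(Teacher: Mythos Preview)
Your proposal is correct and follows exactly the approach the paper intends: the paper presents this theorem as an immediate ``translation'' of Theorem~\ref{thm:positivity_any_dimension} via the identification $\mathcal V_{kn}^{2kn}=\ell(a)$, and you have simply made that dictionary explicit by identifying the top $kn\times kn$ block of the Jacobi frame with the sub-Wronskian matrix. Your care in invoking Theorem~\ref{thm:is_generalized_fanning} to guarantee injectivity of the frame is a detail the paper leaves implicit but which is indeed needed for the equivalence to be clean.
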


Therefore the lack of self-intersections of the Jacobi curve implies the positivity 
of the quadratic functional $\calQ$. 

In contrast with first order variational problems, this Jacobi curve is highly degenerate. We have:

\begin{thm} \label{posto1pb}
	The rank of the Jacobi curve $\ell(t)$ is $n$.
\end{thm}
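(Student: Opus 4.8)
The plan is to apply Proposition \ref{rank-in-frames} directly to the frame spanning the Jacobi curve, exactly as in the block generalization of the one-dimensional computation. Write $\mathcal B(t) = (\mathcal A(t) \,|\, \dot{\mathcal A}(t) \,|\, \cdots \,|\, \mathcal A^{(k-1)}(t))$ for the $2kn \times kn$ matrix whose columns span $\ell(t)$. By Theorem \ref{thm:is_generalized_fanning} the columns of the full prolongation $(\mathcal A \,|\, \dot{\mathcal A} \,|\, \cdots \,|\, \mathcal A^{(2k-1)})$ are $2kn$ linearly independent vectors; the columns of $\mathcal B$ form a subset of these, so $\mathcal B(t)$ is a genuine frame (its columns are independent and it has full column rank $kn$), and Proposition \ref{rank-in-frames} applies to give that the rank of $\ell(t)$ equals $\rank(\mathcal B(t) \,|\, \dot{\mathcal B}(t)) - kn$.

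Next I would identify the redundant columns appearing in the juxtaposition. Differentiating blockwise gives $\dot{\mathcal B}(t) = (\dot{\mathcal A}(t) \,|\, \ddot{\mathcal A}(t) \,|\, \cdots \,|\, \mathcal A^{(k)}(t))$, so the blocks $\dot{\mathcal A}, \ldots, \mathcal A^{(k-1)}$, which are blocks $2$ through $k$ of $\mathcal B$, reappear as blocks $1$ through $k-1$ of $\dot{\mathcal B}$. Deleting these repetitions shows that the column space of $(\mathcal B \,|\, \dot{\mathcal B})$ coincides with that of $(\mathcal A \,|\, \dot{\mathcal A} \,|\, \cdots \,|\, \mathcal A^{(k)})$, a matrix with $(k+1)n$ columns. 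Since $k \leq 2k-1$, these $(k+1)n$ columns are again a subset of the independent columns guaranteed by the fanning property, hence linearly independent, so $\rank(\mathcal B \,|\, \dot{\mathcal B}) = (k+1)n$.

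Combining the two steps, the rank of $\ell(t)$ is $(k+1)n - kn = n$, as claimed. I expect the only delicate point to be the bookkeeping of which blocks of $n$ columns are duplicated under the juxtaposition and which survive; everything else reduces to the single fact that the $(2k-1)$-jet prolongation of $\mathcal A$ is invertible, i.e. Theorem \ref{thm:is_generalized_fanning}. This is precisely the block (dimension-$n$) analogue of the rank-one computation carried out for the one-dimensional Jacobi curve, where the columns $k,k+1,\dots,2k-1$ were seen to repeat columns $2,\dots,k$.
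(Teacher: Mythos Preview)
Your proof is correct and follows essentially the same approach as the paper's: both apply Proposition~\ref{rank-in-frames} to the frame $\mathcal B(t)=(\mathcal A\,|\,\dot{\mathcal A}\,|\cdots|\,\mathcal A^{(k-1)})$, observe that $(\mathcal B\,|\,\dot{\mathcal B})$ has the same column space as $(\mathcal A\,|\,\dot{\mathcal A}\,|\cdots|\,\mathcal A^{(k)})$, and invoke Theorem~\ref{thm:is_generalized_fanning} to conclude that this rank is $(k+1)n$, giving $\rank\ell(t)=(k+1)n-kn=n$. Your version is simply more explicit about the bookkeeping of the repeated blocks and about why $\mathcal B$ has full column rank.
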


\begin{proof}
Proposition \ref{rank-in-frames} can applied to the frame of \eqref{eq:jacobi-frame} of $\ell(t)$. The rank of $\ell$ is then given by 
		\[
		\mathrm{rank}
		\left(\cal{A}(t) \Big| \cdots \Big| \cal{A}^{(k-1)}(t)  \Big| \dot{\cal{A}}(t) \Big| \cdots \Big| \cal{A}^{(k)}(t)\right) - kn\, ,
		\]
which, by theorem \ref{thm:is_generalized_fanning}, is $n(k+1) - kn = n$. 
\end{proof}

Observe that the rank of a fanning curve in the half-Grassmannian $ \text{Gr}(kn, 2kn)$ is $kn$; thus the Jacobi curve of higher order variational problems ($k>1$) is {\em never} fanning.

\subsection{Isotropic-Lagrangian-Coisotropic Flag}

We now study the symplectic properties of the fanning curve $p:[a,b] \rightarrow \text{Gr}(n, 2kn)$  and its prolongations. Endow $\mathbb{R}^{2kn}$ with the canonical 
symplectic form $\omega_{\text{can}}$ induced by the decomposition
$\mathbb{R}^{2kn} \cong  \mathbb{R}^{kn} \oplus \mathbb{R}^{kn}$ in its first and last 
$kn$ coordinates. Then we have

\begin{thm}\label{thm:symp-frame}
	Considering the symplectic space $(\mathbb{R}^{2kn}, \omega_{\text{can}})$, the curve $p:[a,b] \rightarrow \text{Gr}(n, 2kn)$ defined in the last section satisfies
	\begin{itemize}
		\item $j^{i} p:[a,b] \rightarrow \text{Gr}((i+1)n, 2kn)$ is a curve of isotropic subspaces for $i = 0, \ldots, k-2$,
		\item $ \ell = j^{k-1} p:[a,b] \rightarrow \text{Gr}(kn, 2kn)$ is a curve of Lagrangian subspaces,
		\item $j^{i} p:[a,b] \rightarrow \text{Gr}((i+1)n, 2kn)$ is a curve of coisotropic subspaces for $i = k, \ldots, 2k-1$.
	\end{itemize}
\end{thm}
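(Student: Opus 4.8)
The plan is to reduce the whole statement to the single assertion that the middle prolongation $\ell=j^{k-1}p$ is Lagrangian, after which the isotropic and coisotropic claims come for free from the nested flag. The prolongation frames are nested by construction, since the span of $(\mathcal{A}\,|\,\cdots\,|\,\mathcal{A}^{(i)})$ is contained in that of $(\mathcal{A}\,|\,\cdots\,|\,\mathcal{A}^{(i')})$ whenever $i\le i'$; thus $j^{i}p\subseteq\ell$ for $i\le k-2$ and $j^{i}p\supseteq\ell$ for $i\ge k$. A subspace of a Lagrangian is isotropic, and a subspace $W$ containing a Lagrangian $\ell$ is coisotropic because $W^{\omega}\subseteq\ell^{\omega}=\ell\subseteq W$; so the first and third bullets follow at once. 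The dimensions $\dim j^{i}p=(i+1)n$ (in particular $\dim\ell=kn$, half of $2kn$) are exactly the content of the fanning property, Theorem \ref{thm:is_generalized_fanning}. Hence only the isotropy of $\ell$ remains.

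For the isotropy of $\ell$ I would pass to the Hamiltonian presentation. Let $\Psi(t)=\left(\begin{smallmatrix}\mathcal{Y}(t)\\ \mathcal{Z}(t)\end{smallmatrix}\right)$ be the $2kn\times2kn$ fundamental matrix of \eqref{eq:hamiltonian_system} whose columns are the Legendre images of the full system $h_{1},\dots,h_{2kn}$, with $\mathcal{Y},\mathcal{Z}$ the $kn\times2kn$ position and momentum blocks. Unravelling the Legendre transform shows that $\mathcal{Y}(t)^{\top}$ is precisely the Jacobi frame $(\mathcal{A}\,|\,\cdots\,|\,\mathcal{A}^{(k-1)})$, so that $\ell(t)$ is the span of the columns of $\mathcal{Y}(t)^{\top}$. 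The structural input is that the coefficient matrix $H=\left(\begin{smallmatrix}A&B\\ C&-A^{\top}\end{smallmatrix}\right)$ of \eqref{eq:hamiltonian_system} lies in $\mathfrak{sp}$, so the operator $X\mapsto HX+XH^{\top}$ annihilates the phase-space form $J_{\mathrm{ph}}$; consequently $\Psi(t)\,J_{\mathrm{sol}}\,\Psi(t)^{\top}$ equals $J_{\mathrm{ph}}$ for all $t$ as soon as it does so at a single point, where $J_{\mathrm{sol}}$ denotes the matrix of $\omega_{\mathrm{can}}$. Reading off the top-left $kn\times kn$ block of the identity $\Psi\,J_{\mathrm{sol}}\,\Psi^{\top}=J_{\mathrm{ph}}$ gives $\mathcal{Y}\,J_{\mathrm{sol}}\,\mathcal{Y}^{\top}=0$, which says exactly that any two columns of $\mathcal{Y}^{\top}$ are $\omega_{\mathrm{can}}$-orthogonal, i.e. that $\ell$ is isotropic.

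The genuinely delicate point — and the step I expect to be the main obstacle — is the normalization $\Psi(a)\,J_{\mathrm{sol}}\,\Psi(a)^{\top}=J_{\mathrm{ph}}$, equivalently the requirement that the constant Lagrange bracket $\Omega=\Psi^{\top}J_{\mathrm{ph}}\Psi$ coincide with $J_{\mathrm{sol}}$. Here the initial conditions are used decisively: because $h_{1},\dots,h_{kn}$ have all derivatives up to order $k-1$ vanishing at $a$, their Legendre images lie in the vertical Lagrangian of the phase space at $t=a$, so by Lemma \ref{hipoPicone} the first $kn$ solutions span an $\Omega$-isotropic subspace of dimension $kn$; this forces the top-left block of $\Omega$ to vanish, matching $J_{\mathrm{sol}}$ automatically. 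What remains is to pin down the remaining blocks, i.e. to complete $h_{1},\dots,h_{kn}$ to a symplectic basis of the solution space for the bracket $\Omega$ — possible precisely because these span a Lagrangian — so that the coordinate decomposition into the first and last $kn$ solutions genuinely realizes a Darboux splitting rather than an arbitrary one. Verifying that this completion is available, and that it is compatible with the vertical $\ell(a)=\mathcal{V}_{kn}^{2kn}$, is the crux on which the identification $\omega_{\mathrm{can}}=\Omega$ rests.
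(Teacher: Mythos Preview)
Your proof is correct and shares its core mechanism with the paper's: both rely on the fact that the Legendre-transformed fundamental matrix $\Psi(t)$ evolves symplectically, so that once $\Psi(a)$ is chosen symplectic the relation $\Psi J\Psi^\top=J$ (the paper's equation~\eqref{eq:symplectic_frame}) holds for all $t$, and reading off the top-left $kn\times kn$ block yields exactly $(\mathcal{A}^{(i)})^\top J\,\mathcal{A}^{(j)}=0$ for $0\le i,j\le k-1$, i.e.\ the isotropy of $\ell$.

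Your organization is cleaner in one respect: you observe up front that the isotropic and coisotropic assertions are formal consequences of $\ell$ being Lagrangian, via the nested flag, so only the middle bullet needs work. The paper does not make this reduction. Instead it extracts a second family of relations, equation~\eqref{eq:symp-frame2}, namely $(\mathcal{A}^{(i)})^\top J\,\mathcal{A}^{(j)}=0$ for $k\le i\le 2k-2$ and $j\le 2k-i-2$, from the off-diagonal blocks of the symplectic identity; this uses the specific anti-triangular structure of the momentum half $B_2$ of the Legendre transform (the $\pm M_{kk}$ anti-diagonal) to peel off the top-order term inductively. The paper then asserts that \eqref{eq:symp-frame1} and \eqref{eq:symp-frame2} together give all three items. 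Your route avoids this extra computation entirely; the paper's route, on the other hand, actually proves the sharper statement $(j^{i}p)^\omega=j^{\,2k-2-i}p$, which is more than bare coisotropy and explains why the authors bother with \eqref{eq:symp-frame2}.

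On the normalization that you flag as the delicate point: the paper handles it exactly along the lines you anticipate, by imposing the explicit initial condition $\Psi^\top(a)=J$ (equivalently, completing the vertical solutions $h_1,\dots,h_{kn}$ to a symplectic basis of the solution space) and noting that this choice is compatible with the required vanishing of $h_1,\dots,h_{kn}$ up to order $k-1$ at $t=a$. So your concern is both warranted and resolved in the way you suggested.
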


\begin{proof}
	The proof relies on the invariance of the isotropic-Lagrangian-coisotropic concepts 
	under certain ``tiltings" of the Legendre transform. Recall from the beginning of this section that the Legendre transform has the form 
\[
\text{Leg}(h, \ldots, h^{(2k-1)}) = \begin{pmatrix} y \\ z \end{pmatrix} =
\begin{pmatrix} \text{Id} & 0 \\ B_1 & B_2 \end{pmatrix} \begin{pmatrix} h \\ \vdots \\ h^{(2k-1)} \end{pmatrix}.
\]
Where we now have to study in more detail the matrices $B_1$ and $B_2$. 
In the expression above the blocks $\text{Id}$, $0$, $B_1$ and $B_2$ are $kn \times kn$ matrices. The block $B_1$ is an upper triangular matrix and the block $B_2$ has blocks $M_{kk}$ or $-M_{kk}$ in the anti-diagonal and zeros below the anti-diagonal, that is, $B_2$ is of the form
\[
B_2 = \begin{pmatrix} 
\ast & \ast & \cdots & \ast & (-1)^{k-1}M_{kk} \\
\ast & \ast & \cdots &  (-1)^{k-2} M_{kk} & 0 \\
\vdots & \vdots & \reflectbox{$\ddots$} & \vdots & \vdots \\
\ast & -M_{kk} & 0 & \cdots & 0 \\
 M_{kk} & 0 & 0 & \cdots & 0 \end{pmatrix}.
\]
From this we get that each coordinate $z_i$ will depend (linearly) only of $h^{(i-1)}, h^{(i)}, \ldots, h^{(2k-i)}$, for $i = 1, \ldots, k$.

Now for each $h_i$ given in \eqref{eq:frame_of_solutions} consider the image $(\mu_i, \zeta_i)$ of the Legendre transform of the $(2k-1)$-jet prolongation of $h_i$
\[
\text{Leg}(h_i, \ldots, h_i^{(2k-1)}) = \begin{pmatrix} \mu_i \\ \zeta_i \end{pmatrix},
\]
and construct the following matrix
\[
\begin{pmatrix} \mu_1 & \cdots & \mu_{2kn} \\ \zeta_1 & \cdots & \zeta_{2kn} \end{pmatrix} = 
\begin{pmatrix} \text{Id} & 0 \\ B_1 & B_2 \end{pmatrix}
\begin{pmatrix}
 h_1 & \cdots & h_{2kn} \\ 
\vdots & \vdots & \vdots \\
 h_1^{(2k-1)} & \cdots & h_{2kn}^{(2k-1)}
\end{pmatrix}.
\]

Calculating the transpose in the relation above will lead us to
\begin{align}\label{eq:legendre_frame}
\begin{pmatrix} \mu_1^T & \zeta_1^T \\ \vdots & \vdots \\ \mu_{2kn}^T & \zeta_{2kn}^T \end{pmatrix} & = 
\left(\cal{A}(t) \Big| \dot{\cal{A}}(t) \Big| \cdots \Big| \cal{A}^{(2k-1)}(t)\right) 
\begin{pmatrix} \text{Id} & B_1^T \\ 0 & B_2^T \end{pmatrix}\\
& = \left(\cal{A}(t) \Big| \dot{\cal{A}}(t) \Big| \cdots \Big| \cal{A}^{(k-1)}(t) \Big| \cal{C}^1(t) \Big| \cdots \Big| \cal{C}^k(t) \right) \nonumber
\end{align}
where the $2kn \times n$ blocks $\cal{C}^i$ are linear combinations of the columns of $\cal{A}^{(i-1)}, \ldots, \cal{A}^{(2k-i)}$, for $i = 1, \ldots, k$, that can be written as
\[
\cal{C}^i = \cal{A}^{(i-1)} Q_{i-1}^i + \ldots + \cal{A}^{(2k-i)} Q_{2k-i}^i,
\]
where the $Q_j^i$ are $n \times n$ matrices and, most importantly, $Q_{2k-i}^i = \pm M_{kk}$.

Now, the matrix on the left side of the equality \eqref{eq:legendre_frame} will be in the Lie group of symplectic matrices if we suppose the initial condition
\[
\begin{pmatrix} \mu_1^T(a) & \zeta_1^T(a) \\ \vdots & \vdots \\ \mu_{2kn}^T(a) & \zeta_{2kn}^T(a) \end{pmatrix} = 
\begin{pmatrix} 0_{kn} & \text{Id}_{kn} \\ -\text{Id}_{kn} & 0_{kn} \end{pmatrix} \, .
\]
Then we will have that the matrices in \eqref{eq:legendre_frame} are symplectic for each $t$ and the initial condition makes the first columns vanish up to order $k-1$ at $t=a$, as required . 

Denoting by $J$ the matrix of the canonical symplectic form $\omega_{\text{can}}$ in $\mathbb{R}^{2kn}$, we will have that
\begin{equation}\label{eq:symplectic_frame}
 \left(\cal{A}(t) \Big| \cdots \Big| \cal{C}^k(t) \right)^T J  \left(\cal{A}(t) \Big| \cdots \Big| \cal{C}^k(t) \right) = J = \begin{pmatrix} 0_{kn} & \text{Id}_{kn} \\ -\text{Id}_{kn} & 0_{kn} \end{pmatrix}.
\end{equation}
which in turn implies
\begin{equation} \label{eq:symp-frame1}
{\cal{A}^{(i-1)}}^T J \cal{A}^{(j-1)} = 0,
\end{equation}
for $i,j = 1, \ldots, k$, and
\[
\quad {\cal{C}^{i}}^T J \cal{A}^{(j-1)} = 0\,
\]
for $2 \leq i \leq k$ and $1\leq j \leq i - 1$.
Developing further the second expression using that $\cal{C}^i$ can be written as
\[
\cal{C}^i = \cal{A}^{(i-1)} Q_{i-1}^i + \ldots + \cal{A}^{(2k-i)} Q_{2k-i}^i,
\]
with $Q_{2k-i}^i = \pm M_{kk}$ (that is non-degenerate), we will have that
\begin{equation} \label{eq:symp-frame2}
{\cal{A}^{(i)}}^T J \cal{A}^{(j)} = 0,
\end{equation}
for $ k \leq i \leq 2k-2$ and $0 \leq j \leq 2k - i -2$.

Now \eqref{eq:symp-frame1} and \eqref{eq:symp-frame2} implies all items of theorem \ref{thm:symp-frame} simultaneously.

\end{proof}

\end{document}